\newtheorem{theorem}{Theorem}[section] 
\newtheorem{lemma}[theorem]{Lemma}
\newtheorem{corollary}[theorem]{Corollary}
\newtheorem{proposition}[theorem]{Proposition}
\newtheorem{question}[theorem]{Question}
\newtheorem{conjecture}[theorem]{Conjecture}
\theoremstyle{definition}
\newtheorem{definition}[theorem]{Definition}
\newtheorem{example}[theorem]{Example}
\newtheorem{remark}[theorem]{Remark}
\numberwithin{equation}{section}
\newcommand{\ol}{\overline}
\newcommand{\fX}{\mathfrak{X}}
\newcommand{\C}{\mathbb{C}}
\newcommand{\hm}{\mathrm{Hom}}
\newcommand{\SL}{\mathrm{SL}}
\newcommand{\GL}{\mathrm{GL}}
\newcommand{\PGL}{\mathrm{PGL}}
\newcommand{\PSL}{\mathrm{PSL}}
\renewcommand{\L}{\mathcal{L}}
\newcommand{\quot}{/\!\!/}
\newcommand{\R}{\mathbb{R}}
\newcommand{\Z}{\mathbb{Z}}
\newcommand{\bbZ}{\mathbb{Z}}
\newcommand{\bbC}{\mathbb{C}}
\def\co{\colon\thinspace}
\newcommand{\cross}{\times}
\newcommand{\srm}[1]{\stackrel{#1}{\maps}}
\newcommand{\maps}{\longrightarrow}
\newcommand{\tr}{\mathrm{tr}}
\newcommand{\e}{\emph}
\newcommand{\la}{\langle}
\newcommand{\ra}{\rangle}
\newcommand{\fd}{\mathfrak{d}}
\newcommand{\fg}{\mathfrak{g}}
\newcommand{\isom}{\cong}
\renewcommand{\H}{\mathcal{O}}
\newcommand{\fl}{\mathfrak{l}}
\newcommand{\beqa}{\begin{eqnarray*}}
\newcommand{\eeqa}{\end{eqnarray*}}
\newcommand{\cO}{{\mathcal O}}
\newcommand{\Hom}{\mbox{${\rm Hom}$}}
\begin{document}

\title[Compactification of Character Varieties]{Wonderful Compactification of Character Varieties}

\date{\today}

\author[I. Biswas]{Indranil Biswas}

\address{School of Mathematics, Tata Institute of Fundamental
Research, Homi Bhabha Road, Mumbai 400005, India}

\email{indranil@math.tifr.res.in}

\author[S. Lawton]{Sean Lawton}

\address{Department of Mathematical Sciences, George Mason University, 4400 
University Drive, Fairfax, Virginia 22030, USA}

\email{slawton3@gmu.edu}

\author[D. Ramras]{Daniel Ramras\\ \\
{\tiny Appendix by Arlo Caine and Sam Evens}}

\address{Department of Mathematical Sciences, Indiana University-Purdue University 
Indianapolis, 402 N. Blackford, LD 270, Indianapolis, IN 46202, USA}

\email{dramras@math.iupui.edu}

\address{Department of Mathematics, University of Notre Dame, 255 Hurley, Notre Dame, IN, 46556, USA}
\email{sevens@nd.edu}

\address{Mathematics and Statistics, California State Polytechnic University, 3801 West Temple Avenue, Pomona, CA 91768, USA}
\email{jacaine@cpp.edu}

\subjclass[2010]{14D20, 14L30, 14F35, 14M27, 53D17}

\keywords{Character variety, wonderful compactification, moduli space, fundamental group, Poisson}
 
\begin{abstract}
Using the wonderful compactification of a semisimple adjoint affine algebraic group $G$ defined over an algebraically closed field $\Bbbk$ of arbitrary characteristic, we construct a natural compactification $\overline{\fX_\Gamma(G)}$ of the $G$-character variety of any finitely generated group $\Gamma$. When $\Gamma$ is a free group, we show that this compactification is always simply connected with respect to the \'etale fundamental group, and when $\Bbbk\,=\, \C$ it is also topologically simply connected. For other groups $\Gamma$, we describe conditions for the compactification of the moduli space to be simply connected and give examples when these conditions are satisfied, including closed surface groups and free abelian groups when $G\,=\,\PGL_n(\C)$. Additionally, when $\Gamma$ is a free group we identify the boundary divisors of $\overline{\fX_\Gamma(G)}$ in terms of previously studied moduli spaces, and we construct a family of Poisson structures on $\overline{\fX_\Gamma(G)}$ and its boundary divisors arising from Belavin--Drinfeld splittings of the double of the Lie algebra of $G$.  In the appendix, we explain how to put a Poisson structure on a quotient of a Poisson algebraic variety by the action of a reductive Poisson algebraic group.
\end{abstract}

\maketitle

\section{Introduction}

To understand how groups $\Gamma$ act on spaces $X$ one considers homomorphisms $\Gamma\longrightarrow \mathrm{Aut}(X)$. When $\mathrm{Aut}(X)$ is an algebraic group $G$, the 
collection of homomorphisms $\hm(\Gamma, G)$ is an algebraic variety and so deformation 
techniques are available. From the associated study of $G$-local systems, two homomorphisms are 
equivalent when they are conjugate via an element of $G$. In this case, the quotient space 
$\hm(\Gamma, G)/G$ is naturally considered. Unfortunately this quotient space is not generally algebraic 
and so deformation techniques are not available. An approximation to this space, that often has 
better properties, is called the $G$-character variety of $\Gamma$. It will be denoted by 
$\fX_\Gamma(G)$.

When $G$ is a reductive algebraic group over an algebraically closed field $\Bbbk$, the above
mentioned space
$\fX_\Gamma(G)$ is precisely the geometric invariant
theoretic (GIT) quotient $\hm(\Gamma, G)\quot G$; in other words, it is the spectrum of the 
ring of invariants $\Bbbk[\hm(\Gamma,G)]^G$.

Considering families lying in $\fX_\Gamma(G)$ demands an understanding of (geometrically 
meaningful) boundary divisors, and as such compactifications of $\fX_\Gamma(G)$ arise naturally.

For example, in \cite{MorSha}, a compactification of $\SL_2(\C)$-character varieties by actions 
on $\R$-trees gave a new proof of Thurston's theorem that projective measured geodesic laminations give a compactification of Teichm\"uller space; the latter gives a classification of surface group 
automorphisms. Extensions of these ideas to real Lie groups were considered by Parreau~\cite{Parreau}.
More recently, in \cite{compactmanon}, it was shown that each quiver-theoretic 
avatar of a free group character variety developed in \cite{FlLa3} determines a natural 
compactification, under the assumption that $G$ is simple and simply connected over $\C$.
And in \cite{Kom}, compactifications of relative character varieties of punctured spheres are considered in order to understand the relationship between the Dolbeault moduli space of Higgs bundles and the Betti moduli space of representations.

In this paper, we prove the following theorem.

\begin{theorem}\label{intro-thm}
Let $G$ be a semisimple algebraic group of adjoint type defined over an algebraically closed field
$\Bbbk$. Then the wonderful compactification of $G$ determines a compactification of $\fX_\Gamma(G)$ for any finitely generated group $\Gamma$. If $\Gamma$ is a free group, then this compactification is \'etale simply connected. Moreover, when $\Bbbk=\C$ there exists a compactification of $\fX_\Gamma(G)$ that is both topologically and \'etale simply connected whenever $\fX_\Gamma(G)$ is simply connected and normal.
\end{theorem}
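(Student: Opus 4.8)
The plan is to deduce the simple connectivity of $\overline{\fX_\Gamma(G)}$ from that of $\fX_\Gamma(G)$ via the standard surjectivity principle for fundamental groups of Zariski-dense open subvarieties. Recall that $\overline{\fX_\Gamma(G)}$ is built as a GIT quotient $\overline{\hm(\Gamma,G)}\quot G$, where $\overline{\hm(\Gamma,G)}$ is the closure of $\hm(\Gamma,G)$ inside $\overline{G}^{\,r}$ (here $r$ is the number of generators of $\Gamma$, and $G$ acts by the conjugation action, which extends to the wonderful compactification $\overline{G}$ because $G$ is adjoint). Thus $\overline{\fX_\Gamma(G)}$ is a projective---hence compact, over $\C$---variety containing $U\,:=\,\fX_\Gamma(G)$ as a dense open subvariety, and the boundary $\overline{\fX_\Gamma(G)}\setminus U$ is a proper closed subvariety, of complex codimension at least $1$.

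To obtain a normal model I first pass to the normalization $\nu\co \widetilde{X}\,\to\,\overline{\fX_\Gamma(G)}$. Since $U$ is assumed normal, $\nu$ restricts to an isomorphism over $U$; hence $\widetilde{X}$ is a normal projective compactification of $U$ in which $U$ is dense open with complement of codimension $\geq 1$, and $\widetilde{X}$ is irreducible because $U$ is connected. This $\widetilde{X}$ will be the claimed compactification. For the topological statement I then invoke the classical fact that if $X$ is a normal connected complex variety and $Z\subset X$ is closed of codimension at least $1$, the inclusion induces a surjection $\pi_1(X\setminus Z)\,\twoheadrightarrow\,\pi_1(X)$: loops and their nullhomotopies may be pushed off the real-codimension-$\geq 2$ set $Z$, using that normal varieties are locally irreducible. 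Applied to $X=\widetilde{X}$ with $Z$ its boundary, this yields $\pi_1(U)\twoheadrightarrow \pi_1(\widetilde{X})$, so the hypothesis $\pi_1(U)=1$ forces $\pi_1(\widetilde{X})=1$.

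For the \'etale statement I use the algebraic analogue: a connected finite \'etale cover of the normal irreducible variety $\widetilde{X}$ is itself normal, hence irreducible, and therefore restricts to a connected cover over the dense open $U$; this gives a surjection $\pi_1^{\mathrm{et}}(U)\,\twoheadrightarrow\,\pi_1^{\mathrm{et}}(\widetilde{X})$. Over $\C$ the Riemann existence theorem identifies $\pi_1^{\mathrm{et}}$ with the profinite completion of the topological fundamental group, so topological simple connectivity of $U$ gives $\pi_1^{\mathrm{et}}(U)=1$, and the surjection then yields $\pi_1^{\mathrm{et}}(\widetilde{X})=1$.

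The main obstacle is verifying that the surjectivity principle genuinely applies---above all that after normalization $U$ really remains a dense open subvariety with connected, codimension-$\geq 1$ complement. This is exactly where the normality hypothesis on $\fX_\Gamma(G)$ is essential, since without it the normalization need not restrict to an isomorphism over $U$ and $U$ could fail to be locally irreducible in the compactification. One must also confirm that the GIT quotient $\overline{\hm(\Gamma,G)}\quot G$ is projective and that $U$ sits inside it as the expected open locus (so that the boundary is a closed subvariety, not something that disconnects the complement). Once these geometric points are in place, the two surjectivity statements and the \'etale--topological comparison are formal.
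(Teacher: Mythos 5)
Your treatment of the conditional statement over $\C$ is essentially the paper's own argument: normalize $\overline{\fX_\Gamma(G)}$, use normality of $\fX_\Gamma(G)$ to see that the normalization is an isomorphism over the open locus (this is the paper's Lemma~\ref{normal-lem}), and then apply surjectivity of $\pi_1(Z\setminus A)\to\pi_1(Z)$ for a normal projective variety $Z$ (the paper's Proposition~\ref{sc-prop}, quoted from Arapura--Dimca--Hain), deducing the \'etale statement from the topological one. One cosmetic discrepancy: the paper defines $\overline{\fX_\Gamma(G)}$ as the closure of $\fX_\Gamma(G)$ inside $\overline{G}^{\,r}\quot G$ rather than as the GIT quotient of the closure of $\hm(\Gamma,G)$ in $\overline{G}^{\,r}$, though for free $\Gamma$ these agree and in general the difference is harmless for your argument.

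The genuine gap is the second assertion of the theorem: for $\Gamma$ free, $\overline{\fX_{F_r}(G)}$ is \'etale simply connected over an algebraically closed field of \emph{arbitrary characteristic}, with no normality or simple connectivity hypothesis on the character variety. Your method cannot reach this case. In characteristic $p$ there is no topological fundamental group to feed into the Riemann existence comparison, and you have no a priori \'etale simple connectivity of the open piece $\fX_{F_r}(G)$ to push forward; note that $\hm(F_r,G)\cong G^r$ is certainly \emph{not} \'etale simply connected, since $G$ is adjoint, so any such input would itself require an argument about the quotient. The paper argues in the opposite direction from yours: $\overline{G}$ contains a dense open affine cell (Bruhat decomposition), hence is a smooth projective \emph{rational} variety, hence \'etale simply connected by Koll\'ar (and topologically simply connected over $\C$ by Serre); then the GIT quotient map $\overline{G}^{\,r}\to \overline{G}^{\,r}\quot G\cong \overline{\fX_{F_r}(G)}$ induces an \emph{isomorphism} of \'etale (and, over $\C$, topological) fundamental groups by the Biswas--Hogadi--Parameswaran theorem, so simple connectivity descends from the smooth compactified total space rather than being pushed out from the open locus. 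Over $\C$ your conditional argument does recover the free case, since $\pi_1(\fX_{F_r}(G))=1$ by Biswas--Lawton and $\overline{G}^{\,r}\quot G$ is already normal as the GIT quotient of a smooth variety, but the arbitrary-characteristic claim requires the rationality/Koll\'ar/BHP route, and that is the missing idea in your proposal.
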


This result follows from Theorem~\ref{etaletheorem}, Corollary~\ref{sc-cor} and Lemma~\ref{normal-lem}.  Let $\overline{\fX_\Gamma (G)}$ denote the compactification of $\fX_\Gamma(G)$ from Theorem \ref{intro-thm}.  In Proposition~\ref{sc-wonderful-cor}, we apply Theorem~\ref{intro-thm} to prove the following corollary.

\begin{corollary} Let $G$ be a semisimple algebraic group of adjoint type over $\C$. Then $\overline{\fX_\Gamma (G)}$ is both topologically and \'etale simply connected if:
\begin{enumerate}
\item $\Gamma$ is a free group,
\item $\Gamma$ is a surface group and $G\,=\,\PGL_n(\C)$, or
\item $\Gamma$ is free abelian and $G$ does not have exceptional factors.
\end{enumerate}
\end{corollary}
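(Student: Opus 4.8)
The plan is to derive all three cases as consequences of the second assertion of Theorem~\ref{intro-thm}, which, over $\Bbbk\,=\,\C$, produces a compactification $\overline{\fX_\Gamma(G)}$ that is simultaneously topologically and \'etale simply connected as soon as $\fX_\Gamma(G)$ is itself simply connected and normal. The entire problem therefore reduces, in each of the three cases, to verifying these two hypotheses for the character variety $\fX_\Gamma(G)$ over $\C$. In case~(1) the \'etale conclusion is already the direct content of the first assertion of Theorem~\ref{intro-thm}, valid over any algebraically closed field, so there only the complex topological statement genuinely requires this reduction.

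Normality is the more uniform input. Since $\fX_\Gamma(G)\,=\,\hm(\Gamma,G)\quot G$ is a GIT quotient of the affine representation variety by the reductive group $G$, and a GIT quotient of a normal affine variety by a reductive group is again normal, it suffices to control the normality of $\hm(\Gamma,G)$ (or of the relevant union of components carrying the polystable locus). For $\Gamma$ a free group this is immediate: $\hm(F_r,G)\,\isom\,G^r$ is smooth, hence normal and irreducible, so $\fX_{F_r}(G)$ is normal. For a closed surface group with $G\,=\,\PGL_n(\C)$, and for $\Gamma\,\isom\,\Z^n$ free abelian with $G$ free of exceptional factors, I would invoke the known normality of the corresponding character varieties, which in the abelian case rests on the description of $\fX_{\Z^n}(G)$ in terms of a quotient of a power of a maximal torus.

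The crux, and the main obstacle, is the simple connectivity of $\fX_\Gamma(G)$ in each case; these three families are precisely the ones for which $\pi_1$ of the character variety has been determined in earlier work, and the hypotheses of the corollary single out exactly the situations in which that fundamental group vanishes. For $\Gamma$ free one uses that $\fX_{F_r}(G)$ is simply connected. For a surface group and $G\,=\,\PGL_n(\C)$ one invokes the computation of $\pi_1$ of the $\PGL_n(\C)$-character variety of a closed surface, which vanishes in this adjoint case. For $\Gamma\,\isom\,\Z^n$ and $G$ without exceptional factors, one uses the identification of (the relevant components of) $\fX_{\Z^n}(G)$ with a quotient $T^n/W$ of a power of a maximal torus $T$ by the Weyl group $W$, together with the computation that this quotient is simply connected precisely under the no-exceptional-factors hypothesis.

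Once both hypotheses are secured, each case is a formal application of Theorem~\ref{intro-thm}. I expect the real difficulty to lie entirely in the surface-group and free-abelian simple-connectivity inputs: in contrast to the free-group case, where $G^r$ is smooth and irreducible and the topology of the quotient is transparent, surface-group and commuting representation varieties may be reducible and singular, so the vanishing of $\pi_1$ for their quotients depends on a delicate analysis of connected components and, in the abelian case, on Weyl-group computations that isolate exactly the exceptional-factor obstruction. The normality statements and the concluding invocation of Theorem~\ref{intro-thm} are then comparatively routine.
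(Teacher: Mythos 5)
Your overall skeleton coincides with the paper's strategy for cases (2) and (3): Proposition~\ref{sc-wonderful-cor} there proceeds exactly by verifying normality of (the identity component of) $\fX_\Gamma(G)$, combining the vanishing $\pi_1(\fX_\Gamma(G))=1$ from \cite{BiLa,BLR} with the surjectivity of $\pi_1$ of a dense open subset of a normal projective variety (Proposition~\ref{sc-prop}), and passing to the \emph{normalized} wonderful compactification via Lemma~\ref{normal-lem}. For case (1), however, the paper's actual argument (Theorem~\ref{etaletheorem}) is more direct than your reduction: $\ol{G}^{\,r}$ is smooth and simply connected (Corollary~\ref{cor-sc}), and the GIT quotient map induces an isomorphism on both \'etale and topological fundamental groups by \cite{biswasfundamental}, so no appeal to $\pi_1(\fX_{F_r}(G))=1$ or to normalization is needed, and one gets that the wonderful compactification itself --- not merely some compactification --- is simply connected. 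Your route for case (1) does work given \cite{BiLa}, but it proves a slightly weaker statement. Note also that in cases (2) and (3) the paper's conclusion is for $\fX^0_\Gamma(G)$, the component of the trivial representation, a point your parenthetical about components only gestures at.

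Case (3) of your proposal contains a genuine error: you have inverted which hypothesis the ``no exceptional factors'' condition serves. In the paper, the input $\pi_1(\fX_{\Z^r}(G))=1$ from \cite{BLR} holds for \emph{every} semisimple adjoint $G$, exceptional factors included; the restriction to groups without exceptional factors enters only through \emph{normality}, since Sikora's theorem \cite[Theorem 2.1]{Si6} establishes normality of $\fX^0_{\Z^r}(G)$ only for $G=\mathrm{SL}_n,\GL_n,\mathrm{SO}_n,\mathrm{Sp}_n$; the paper then transfers this to $\PGL_n$, $\PSO_n$, $\PSp_n$ via $\fX_{\Z^r}(G/Z(G))\cong \fX_{\Z^r}(G)/Z(G)^r$ and to products via Lemma~\ref{normalproduct}. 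Your proposed normality argument --- that it ``rests on the description of $\fX_{\Z^n}(G)$ in terms of a quotient of a power of a maximal torus'' --- is circular: the natural map $T^r/W\to \fX^0_{\Z^r}(G)$ is a bijective normalization map, and whether it is an isomorphism of varieties is precisely the normality question; since $T^r/W$ is automatically normal (a finite quotient of a smooth variety), your argument would ``prove'' normality for exceptional groups as well, where it is not known. Correspondingly, your assertion that this quotient is ``simply connected precisely under the no-exceptional-factors hypothesis'' is false as stated: no such dichotomy in $\pi_1$ exists in the cited literature, and the analogous remark applies to case (2), where the restriction to $\PGL_n$ likewise comes from the normality input (Simpson's normality of $\hm(\pi_1\Sigma,\GL_n)$ \cite{Simpson1,Simpson2}, descended through the quotient by $\hm(\pi_1\Sigma,Z(\GL_n))$ and GIT), not from the $\pi_1$ computation. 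As written, your case (3) cannot be completed; the repair is to quote Sikora for the classical factors and take the $\pi_1$ vanishing from \cite{BLR} unconditionally.
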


In Sections \ref{divisor} and \ref{poisson} we further study the case in which $\Gamma$ is a free group. We identify the boundary divisors of $\overline{\fX_\Gamma(G)}$ (Theorem~\ref{div-thm}) in terms of the \e{parabolic character varieties} studied by Biswas--Florentino--Lawton--Logares~\cite{BFLL}, and we construct a Poisson structure on $\overline{\fX_\Gamma(G)}$ and on its boundary divisors (Theorem~\ref{poisson-thm}) using work of Evens--Lu~\cite{EL1, EL2}, who constructed a Poisson structure on $\ol{G}$. To show there is a Poisson structure on $\overline{\fX_\Gamma(G)}$, we utilize recent work of Lu--Mouquin~\cite{Lu-Mouquin} to equip $\ol{G}^{\,r}$ with a Poisson structure for which the diagonal conjugation action of $G$ is a Poisson action (for an appropriate Poisson Lie group structure on $G$). To show that this Poisson structure descends to $\overline{\fX_{\Gamma}(G)}$, we use the fact that when a reductive algebraic Poisson group acts on a projective Poisson variety and the action is Poisson, then the GIT quotient inherits a Poisson structure.  This fact, although known to experts, does not appear in the literature.  The appendix, written by Arlo Caine and Sam Evens, provides a proof of this fact.

\section*{Acknowledgements}

We thank S. Evens, J.-H. Lu, C. Manon, and A. Weinstein for helpful discussions, an anonymous referee for identifying an error in a previous version of this paper, and two subsequent referees for helping improve exposition and clarity.  The use of mixed product structures in Section 6 was suggested by Evens.   We thank him for explaining these ideas.  We also thank the International Centre for Theoretical Sciences in Bangalore, India for  hosting us during the workshop on Higgs Bundles from 21 March 2016 to 01 April 2016, where some of this work was conducted. Biswas is supported by the J. C. Bose Fellowship, Lawton is supported by the Simons Foundation, USA (\#245642), and Ramras is supported by the Simons Foundation, USA (\#279007). Lastly, Lawton and Ramras acknowledge support from NSF grants DMS 1107452, 1107263, 1107367 ``RNMS: GEometric structures And Representation  varieties" (the GEAR Network).

\section{Wonderful Compactification of Groups}

Let $G$ be a connected affine algebraic group defined over an algebraically closed field $\Bbbk$;
there is no condition on its characteristic. Let $\mathfrak{g}\,=\,\mathrm{Der}_\Bbbk(\Bbbk[G],\Bbbk)^G$ be the Lie algebra of $G$, where $G$ acts on the derivations via the left--translation action of $G$ on itself. The group $G$ is said to be of {\it adjoint type} if the adjoint representation
\begin{equation}\label{e1} \rho\, :\, G\, \longrightarrow\,\text{GL}(\mathfrak{g})
\end{equation} is an embedding. The center of a  group of adjoint type is trivial.

We will always assume that $G$ is semisimple of adjoint type. Therefore, $G$ is of the
form $\prod_{i=1}^m (G_i/Z_i)$, where each $G_i$ is a simple simply connected group and
$Z_i$ is the center of $G_i$.

A {\it compactification} of a variety $X$ is a complete variety $Y$ with $X$ as 
a dense open subset.  In \cite{DP}, assuming the base field is of characteristic 0, a compactification of $G$ is 
constructed, called the {\it wonderful compactification}. In \cite{strict} the construction is 
generalized to arbitrary characteristic. Denote the wonderful compactification of $G$ by $\overline{G}$.
In \cite{EL1, EL2}, a Poisson structure on $\overline{G}$ is constructed when
the characteristic of the base field is zero.

We now describe the construction of $\overline{G}$, following the exposition in \cite{EL1, EvJo}.
Let $n$ be the dimension of $G$.
The general linear group $\GL(\mathfrak{g} \oplus \mathfrak{g})$ acts on the space of $n$-dimensional 
subspaces of $\mathfrak{g} \oplus \mathfrak{g}$ transitively with the stabilizer of a
point being a parabolic subgroup $P$. The Grassmannian $\mathrm{Gr}(n, \mathfrak{g} \oplus 
\mathfrak{g})\,=\,\GL(\mathfrak{g} \oplus \mathfrak{g})/P$ of dimension $(2n)^2-3n^2\,=\,n^2$
parametrizes the $n$-dimensional subspaces of $\mathfrak{g} \oplus \mathfrak{g}$.
Consider the composition homomorphism
$$
G\times G \, \stackrel{\rho\times\rho}{\longrightarrow} \GL(\mathfrak{g})\times
\GL(\mathfrak{g})\, \hookrightarrow\, \GL(\mathfrak{g} \oplus \mathfrak{g})\, ,
$$
where $\rho$ is the homomorphism in \eqref{e1} and $\GL(\mathfrak{g})\times
\GL(\mathfrak{g})$ is the subgroup of automorphisms of $\mathfrak{g} \oplus \mathfrak{g}$
that preserves its decomposition. This homomorphism produces an action of
$G \times G$ on $\mathrm{Gr}(n, \mathfrak{g} \oplus \mathfrak{g})$. Let
$$\mathfrak{g}_\Delta \,:=\, \{(x, \,x) \ \mid \ x \,\in\, 
\mathfrak{g}\}\, \subset\, \mathfrak{g} \oplus \mathfrak{g}$$ be the diagonal subalgebra, which
is an $n$-dimensional subspace and hence a 
point in $\mathrm{Gr}(n, \mathfrak{g} \oplus \mathfrak{g})$. The stabilizer of
$\mathfrak{g}_\Delta$ with respect to the above action of $G\times G$ on $\mathrm{Gr}(n, \mathfrak{g}
\oplus \mathfrak{g})$ is $$G_\Delta \,:=\, \{(g,\, g) \ \mid\ g \,\in\, G\}\, .$$ 
Therefore, the orbit of $\mathfrak{g}_\Delta$ is
$$(G\times G)\cdot\mathfrak{g}_\Delta\,=\, (G \times G)/G_\Delta \,\isom \, G\, .$$

The wonderful compactification of $G$ is then $\overline{G} \,=\, \overline{(G \times 
G)\cdot\mathfrak{g}_\Delta}$ , where the closure is taken inside $\mathrm{Gr}(n, \mathfrak{g} \oplus 
\mathfrak{g})$, making $\overline{G}$ an irreducible projective variety containing $G\,=\,
(G\times G)\cdot\mathfrak{g}_\Delta$ as a Zariski open subvariety.

\begin{theorem}[\cite{strict},~\cite{DP}]\label{wonderfulgroup}
The following properties hold for the wonderful compactification $\overline{G}$:
\begin{enumerate}
\item The action of $G\times G$ on $G$, defined by $(g_1, \,g_2)\cdot x\,=\, g_1xg^{-1}_2$, extends to a $G\times G$ action on $\overline{G}$ with $2^r$ orbits, where $r\,=\, {\rm rank}(G)$;

\item $\overline{G}$ is smooth, as is each $G\times G$ orbit closure in $\overline{G}$;

\item The complement $\overline{G}\setminus G$ consists of $r$ smooth divisors $D_1,\ldots ,D_r$ with simple normal crossings, each of which is the closure of a single $G\times G$ orbit.
\end{enumerate}
\end{theorem}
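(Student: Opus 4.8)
The plan is to reduce all three assertions to an explicit local model for $\ol{G}$ near a point of the closed orbit, obtained by degenerating $\fg_\Delta$ along a regular dominant cocharacter. First I would fix a maximal torus $T\subset G$ with root system $\Phi$, simple roots $\alpha_1,\ldots,\alpha_r$, and root--space decomposition $\fg\,=\,\mathfrak{t}\oplus\bigoplus_{\alpha\in\Phi}\fg_\alpha$; let $e_\alpha$ span $\fg_\alpha$. For a cocharacter $h\co\mathbb{G}_m\to T$ the one--parameter family $(h(t),1)\cdot\fg_\Delta$ is spanned by the diagonal $\mathfrak{t}_\Delta\,=\,\{(x,x):x\in\mathfrak{t}\}$ together with the lines through $(t^{\la\alpha,\,h\ra}e_\alpha,\,e_\alpha)$, $\alpha\in\Phi$. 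Choosing $h$ regular dominant and letting $t\to 0$, each positive--root line limits to $(0,\,e_\alpha)$ and each negative--root line, after rescaling, limits to $(e_\alpha,\,0)$, so the limiting subspace is
\[
\fg_0\,=\,\mathfrak{t}_\Delta\,\oplus\,(\mathfrak{n}^-\oplus 0)\,\oplus\,(0\oplus\mathfrak{n}^+)\,\in\,\Gr(n,\fg\oplus\fg),
\]
where $\mathfrak{n}^+\,=\,\bigoplus_{\alpha>0}\fg_\alpha$ and $\mathfrak{n}^-\,=\,\bigoplus_{\alpha<0}\fg_\alpha$. A direct check identifies the $G\times G$--stabilizer of $\fg_0$ with $B^-\times B^+$, so the $(G\times G)$--orbit of $\fg_0$ is the closed orbit $G/B\times G/B$, and $\fg_0$ is the corner of the compactification.

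Next I would construct a $(B^-\times B^+)$--stable affine open neighborhood $\cO$ of $\fg_0$ in $\ol{G}$ and identify it with an affine space. Working in the Grassmannian chart in which $n$--planes near $\fg_0$ are graphs of linear maps from $\fg_0$ to a fixed complement, I expect $\cO$ to split as a product of the two unipotent directions and the toric slice computed above, namely an isomorphism
\[
\cO\,\isom\,\mathfrak{n}^+\times\mathbb{A}^r\times\mathfrak{n}^-,
\]
where the $\mathbb{A}^r$ factor is the closure of the $\mathfrak{t}_\Delta$--orbit (equivalently the closure of the image of $h\mapsto (h,1)\cdot\fg_\Delta$), with coordinates $z_1,\ldots,z_r$ dual to the simple roots, so that $z_i\,=\,t^{\la\alpha_i,\,h\ra}$ along the degenerating curve. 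Establishing that this chart is genuinely open in $\ol{G}$, affine, and isomorphic to $\mathbb{A}^{2N+r}\,=\,\mathbb{A}^n$ with $N\,=\,|\Phi^+|$ is the technical heart of the argument; it is where the characteristic--free treatment of \cite{strict} must replace the complete--reducibility inputs available in characteristic zero, and it is the step I expect to be the main obstacle.

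Granting the chart, the three statements follow uniformly. Smoothness of $\ol{G}$ holds at $\fg_0$ because $\cO\isom\mathbb{A}^n$, hence everywhere once $\cO$ is translated by $G\times G$, since every orbit meets $\cO$: the coordinate loci $\{z_i\,=\,0\}$ already exhibit all the boundary degenerations. The $(G\times G)$--orbits correspond bijectively to subsets $I\subseteq\{1,\ldots,r\}$ via the locally closed stratum on which exactly the coordinates $\{z_i:i\in I\}$ vanish, giving $2^r$ orbits; here $I\,=\,\varnothing$ recovers the open orbit $G\,=\,(G\times G)\cdot\fg_\Delta$ and $I\,=\,\{1,\ldots,r\}$ the closed orbit, and the extension of the twisted action $(g_1,g_2)\cdot x\,=\,g_1 x g_2^{-1}$ is simply the restriction of the ambient $(G\times G)$--action on the Grassmannian. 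Finally, each boundary divisor $D_i$ is the $(G\times G)$--translate of the coordinate hyperplane $\{z_i\,=\,0\}$, hence the closure of the codimension--one orbit indexed by $I\,=\,\{i\}$; locally these are the coordinate hyperplanes of $\mathbb{A}^n$, which yields both their smoothness and the simple normal crossing property, while an arbitrary orbit closure is cut out in $\cO$ by a subset of the coordinates and is itself a wonderful--type variety for an associated Levi subgroup, hence smooth.
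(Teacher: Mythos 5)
A preliminary remark on the comparison you asked for: the paper offers no proof of this theorem at all --- it is imported by citation from \cite{DP} (characteristic zero) and \cite{strict} (arbitrary characteristic), so the benchmark is the argument in those references. Your outline follows the same strategy they use, transported into the Grassmannian model $\Gr(n,\fg\oplus\fg)$ that the paper adopts from \cite{EL1, EvJo}: degenerate $\fg_\Delta$ along a regular dominant cocharacter, identify the corner point $\fg_0$ and its stabilizer, build a big cell, and translate by $G\times G$. Your limit computation is correct, as is the identification of the stabilizer of $\fg_0$ with $B^-\times B^+$ (the first projection of $\fg_0$ is $\mathfrak{b}^-$ and the kernel of that projection is $0\oplus\fn^+$, forcing $(g_1,g_2)\in B^-\times B^+$; conversely $\mathrm{Ad}_{b_1}x - x \in \fn^-$ and $\mathrm{Ad}_{b_2}x - x\in\fn^+$ for $x\in\ft$ shows containment).

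There are, however, genuine gaps beyond the one you flag. (i) The step you concede --- that $\cO$ is open in $\ol{G}$, affine, and isomorphic to $\mathbb{A}^n$ --- is not a side issue but is essentially the entire content of the characteristic-free treatment in \cite{strict}; note also that $\ol{G}\cap(\text{graph chart})$ is not a priori a product of linear subspaces, and the identification $\cO\isom\fn^+\times\mathbb{A}^r\times\fn^-$ must be proved by exhibiting the map $(u,z,v)\mapsto (u,v)\cdot z$ from $U^+\times\mathbb{A}^r\times U^-$ (unipotent groups, not exponentials, in characteristic $p$) and showing it is an open immersion onto that intersection. (ii) Your globalization is circular as written: you justify ``every orbit meets $\cO$'' by saying the loci $\{z_i=0\}$ ``already exhibit all the boundary degenerations,'' but that is exactly what must be shown. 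The standard argument first proves $\ol{G}=(G\times G)\cdot\ol{\mathbb{A}^r}$, over $\C$ via the Cartan decomposition $G=KTK$ and compactness of $K$, and in general via the valuative criterion of properness combined with the Iwasawa/Cartan decomposition of $G\bigl(\Bbbk((t))\bigr)$; only after this reduction does the chart control all of $\ol{G}$, and only then do translates of $\cO$ yield smoothness everywhere. (iii) Even granting (i) and (ii), the count of $2^r$ orbits presumes that each locally closed stratum $\{z_i=0 \Leftrightarrow i\in I\}$ is a \emph{single} $G\times G$--orbit. This requires computing the stabilizer of the limit subalgebra $\fg_I$ attached to a face of the chamber --- a pair of opposite parabolics glued along the common Levi quotient --- so that the $I$-th orbit fibers over $G/P_I^-\times G/P_I$ with fiber the adjoint Levi group; transitivity on the stratum, the statement that each $D_i$ is the closure of one orbit, and your closing claim that orbit closures are wonderful compactifications for the Levi all hang on this computation, which the proposal asserts rather than performs. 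Without (ii) and (iii), parts (1) and (3) of the theorem are unsupported even though smoothness and the simple normal crossing property would indeed fall out of the chart alone.
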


\begin{remark}\label{homogeneous}
In \cite{DP}, a canonical compactification (called the wonderful compactification) is constructed for certain homogeneous spaces $H/K$ called symmetric varieties, where $K$ is the fixed locus of an involution.  The wonderful compactification of $G$ above is a special case of this more general construction since the diagonal copy of $G$ inside $G\times G$, denoted $G_\Delta$, is the fixed locus of the involution $(a,b)\mapsto (b,a)$.  Then $G\cong (G\times G)/G_\Delta$, and the left action of $H$ on $H/K$ extends to the wonderful compactification of $H/K$ and becomes the $G\times G$ action on $\overline{G}$ after passing through this isomorphism.  We note this generalization since we will be referring to properties about this more general construction later in this paper.
\end{remark}

Note that the diagonal $G_\Delta\cong G$ acts by conjugation on $\overline{G}$.
We now show that $\overline{G}$ is simply connected, after reminding the reader of requisite terms.

A morphism of irreducible normal projective varieties $f\,:\,Y\,\rightarrow\, X$ is {\it 
\'etale} if the induced map $\widehat{\mathcal{O}_{f(y)}}\,\rightarrow\, 
\widehat{\mathcal{O}_y}$ between complete local rings is an isomorphism for all points $y\,\in\, 
Y$. An \'etale morphism $f$ is {\it Galois} if the induced injection on quotient 
fields $\Bbbk(X)\,\rightarrow\,\Bbbk(Y)$ is a Galois extension. The Galois
group for this extension acts on $Y$ with $X$ being the quotient. A {\it Galois covering} of 
$X$ is a finite Galois \'etale map $Y\,\rightarrow\, X$. We say $X$ is {\it \'etale simply 
connected} if it does not admit any non-trivial Galois coverings. Over $\C$, if the topological 
fundamental group of $X$ (in the strong topology) is trivial, then the \'etale fundamental group 
is trivial \cite{Milne}.
 
\begin{corollary}\label{cor-sc}
The variety $\overline{G}$ is \'etale simply connected. When $\Bbbk\,=\, \C$, the
topological fundamental group of $\overline{G}$ is trivial. 
\end{corollary}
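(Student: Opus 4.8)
The plan is to exploit the fact that $\overline{G}$ is a smooth projective \emph{rational} variety and to run two parallel arguments: one for the \'etale fundamental group valid in arbitrary characteristic, and one for the topological fundamental group over $\C$. By Theorem~\ref{wonderfulgroup} the variety $\overline{G}$ is smooth and projective, and it is rational, since the open Bruhat cell $U^-\times T\times U$ of $G$ is an open dense subvariety isomorphic to an open subset of $\mathbb{A}^n$, so $G$ is rational and $\overline{G}$ is a smooth compactification of it, hence birational to $\mathbb{P}^n$. Because separable rational connectedness is a birational invariant of smooth proper varieties and $\mathbb{P}^n$ is separably rationally connected, $\overline{G}$ is separably rationally connected. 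The main engine will be the theorem that a smooth projective separably rationally connected variety over an algebraically closed field has trivial \'etale fundamental group, together with its characteristic-zero topological refinement (Campana; Koll\'ar--Miyaoka--Mori).

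For the \'etale statement in arbitrary characteristic I would argue through very free rational curves. Since $\overline{G}$ is separably rationally connected, a general point lies on a very free rational curve $\mathbb{P}^1\to\overline{G}$. Given a connected finite \'etale cover $f\colon Y\to\overline{G}$, its restriction over such a curve is a finite \'etale cover of $\mathbb{P}^1$; because $\pi_1^{\text{\'et}}(\mathbb{P}^1)$ is trivial in every characteristic (Riemann--Hurwitz forbids connected unramified covers of $\mathbb{P}^1$ of degree $>1$), the curve lifts to $\deg(f)$ disjoint copies in $Y$. As such very free curves sweep out $\overline{G}$ and join arbitrary pairs of general points, a standard connectedness argument forces $f$ to split, so $\deg(f)=1$. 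Hence $\overline{G}$ admits no nontrivial connected finite \'etale covering, and in particular no nontrivial Galois covering, so it is \'etale simply connected.

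Over $\C$ I would establish the topological statement by a direct computation of meridians, which is self-contained and explains the role of the rank-$r$ boundary. Since $G\subset\overline{G}$ is open dense with complement the simple normal crossings divisor $D_1\cup\cdots\cup D_r$, the inclusion induces a surjection $\pi_1^{\mathrm{top}}(G)\twoheadrightarrow\pi_1^{\mathrm{top}}(\overline{G})$ whose kernel is normally generated by the meridians $\gamma_1,\dots,\gamma_r$ around the $D_i$. Computing $\gamma_i$ inside the closure $\overline{T}$ of a maximal torus, where $D_i$ is cut out on $T$ by the simple root $\alpha_i$, the loop $s\mapsto\varpi_i^\vee(e^{2\pi i s})$ links $D_j$ with multiplicity $\langle\alpha_j,\varpi_i^\vee\rangle=\delta_{ij}$, so $\gamma_i$ is the class of the fundamental coweight $\varpi_i^\vee$ in $\pi_1^{\mathrm{top}}(T)=X_*(T)$. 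For the adjoint group $X_*(T)$ is the coweight lattice $P^\vee$ with $\mathbb{Z}$-basis $\{\varpi_i^\vee\}$, and $\pi_1^{\mathrm{top}}(G)=P^\vee/Q^\vee$ is a quotient of $X_*(T)$; thus the $\gamma_i$ generate $\pi_1^{\mathrm{top}}(G)$ and $\pi_1^{\mathrm{top}}(\overline{G})=1$. (Alternatively one may simply invoke that smooth projective rationally connected varieties over $\C$ are simply connected.) The \'etale simple connectivity over $\C$ then also follows from the comparison theorem cited in the text.

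The main obstacle is the positive-characteristic \'etale statement, and it is instructive to see why the easy arguments fail there. One cannot merely invoke that $\overline{G}$ contains an affine cell isomorphic to $\mathbb{A}^n$: in characteristic $p$ the affine space $\mathbb{A}^n$ has an enormous wild \'etale fundamental group (Artin--Schreier covers), so an affine paving carries no information about $\pi_1^{\text{\'et}}$. Likewise the tame meridian computation of the previous paragraph controls only the prime-to-$p$ quotient of $\pi_1^{\text{\'et}}(\overline{G})$, leaving the wild $p$-part untouched. What rescues the argument is properness together with the abundance of rational curves: it is precisely the completeness of $\mathbb{P}^1$, equivalently of the very free curves inside the projective variety $\overline{G}$, that kills the wild part. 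This is why the separable rational connectedness of $\overline{G}$, rather than any feature of the open subvariety $G$, is the essential input.
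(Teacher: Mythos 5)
Your proposal is correct, and it shares the paper's skeleton only at the first step: like the paper, you establish rationality of $\overline{G}$ from the open Bruhat cell, using smoothness and projectivity from Theorem~\ref{wonderfulgroup}. After that you diverge. The paper simply cites two theorems: Koll\'ar's result that a smooth projective rational variety over an algebraically closed field is \'etale simply connected, and Serre's theorem that a smooth projective rational (indeed unirational) complex variety has trivial topological fundamental group. You instead (a) unpack the proof behind the first citation --- passing to separable rational connectedness and running the very-free-curve argument to split any finite \'etale cover --- and (b) replace the second citation by a self-contained meridian computation: $\pi_1^{\mathrm{top}}(G)\twoheadrightarrow\pi_1^{\mathrm{top}}(\overline{G})$ with kernel normally generated by meridians of the $D_i$, which you identify in the toric chart of $\overline{T}$ with the fundamental coweights $\varpi_i^\vee$ via $\langle\alpha_j,\varpi_i^\vee\rangle=\delta_{ij}$; since for adjoint $G$ one has $\pi_1^{\mathrm{top}}(G)=P^\vee/Q^\vee$ and the $\varpi_i^\vee$ generate $P^\vee$, the quotient is trivial. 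Your computation is correct and buys genuine structural insight the citation does not: it shows exactly how adjointness enters (the coweight meridians kill all of $P^\vee/Q^\vee$), whereas the paper's route buys brevity and the observation (recorded in the paper's subsequent remark) that the argument applies verbatim to \emph{any} smooth compactification of $G$, not just the wonderful one --- a generality your boundary-specific meridian computation does not immediately give, though your parenthetical fallback to the rationally-connected theorem recovers it. One small caution: your appeal to birational invariance of separable rational connectedness in positive characteristic is true but not free of content; the needed direction can be done without resolution of singularities by deforming a very free curve (or taking a general line in $\mathbb{P}^n$) off the codimension-two indeterminacy locus and noting that an injective sheaf map from an ample bundle with torsion cokernel on $\mathbb{P}^1$ forces ampleness of the target --- and this, together with your cover-splitting sketch, is essentially the content of the very reference the paper invokes, so you are reproving rather than bypassing it. Your closing diagnosis is also apt: the affine cell controls only rationality, not $\pi_1^{\text{\'et}}$ (Artin--Schreier covers of $\mathbb{A}^n$), and the paper likewise uses the cell only for rationality before handing the wild part to properness via Koll\'ar's theorem.
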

 
\begin{proof}
Recall that $G$ is an open dense affine subvariety of $\overline{G}$. Since we are over 
an algebraically closed field, the Bruhat decomposition gives an affine cell in $G$ that 
is open and dense \cite{Borel}. So $\overline{G}$ is birational to affine space, which itself
is birational to projective space. Therefore, $\overline{G}$ is a rational variety. In 
general a projective, smooth, rational variety over an algebraically closed field is 
\'etale simply connected \cite{Kollar}. Thus, $\overline{G}$ is \'etale simply 
connected.

When $\Bbbk\,=\, \C$, the topological fundamental group of $\overline{G}$ is trivial, 
because $\overline{G}$ is a rational variety \cite[p.~483, Proposition 1]{Se}.
\end{proof}

\begin{remark}
Our proof of Corollary \ref{cor-sc} shows that any smooth compactification of $G$ is \'etale simply connected, and topologically simply connected over $\C$.
\end{remark}

\begin{example}\label{ex-group}
In the case of $G\,=\,\PSL_2(\C)\,=\,\PGL_2(\C)$, we have $\overline{G}\,=\,\mathbb{P}(M_2(\C))
\,=\,\C P^3$ where $M_2(\C)$ is the monoid of $2\times 2$ complex matrices. Naturally
$\PSL_2(\C)\,\subset \,\mathbb{P}(M_2(\C))$ and the action of $\PSL_2(\C)\times \PSL_2(\C)$
on $\PSL_2(\C)$ defined by $(g_1, \,g_2)\cdot x\,=\, g_1xg^{-1}_2$ extends to
an action on $\mathbb{P}(M_2(\C))$. The complement $D\,=\,\mathbb{P}(M_2(\C))\setminus \PGL_2(\C)$ is
the divisor $$\left(\{X\,\in \,M_2(\C)\ \mid\ \det(X)\,=\,0\}\setminus \{\mathbf{0}\}\right)/\C^*
\,=\,\left(\{(a,b,c,d)\,\in\,\C^4\ \mid \ ad\,=\,bc\}\setminus \{\mathbf{0}\}\right)/\C^*$$
which is the image of $\C P^1\times \C P^1$ under the Segre Embedding.
In this divisor, the locus of $a\neq 0$ is an affine open $\C^2$, and when $a\,=\,0$ we
have two copies of $\C P^1$ intersecting at the point $[(0,0,0,1)].$ 
\end{example}

\section{Wonderful Compactification of Character Varieties}\label{wonchar}

In this section, given a finitely generated group $\Gamma$ and a semisimple
algebraic group $G$ of adjoint type we construct a compactification of the $G$-character variety of $\Gamma$. There is no assumption on the characteristic of the algebraically closed base field $\Bbbk$.

First however we remind the reader of the basic terms and theorems of projective GIT.  A $G$-linearized line bundle over a $G$-variety $X$ is a line bundle $L$ over $X$ such that the projection map $L\to X$ is $G$-equivariant, and where the zero section of $L$ is $G$-invariant.  A point $x\in X$ is 
{\it semistable} with respect to $L$ if there exists a $G$-invariant invariant section $s:X\to L^{\otimes m}$ so 
$s(x)\not=0$ and the principal open $U_s$ defined by $s$ is affine.  If 
additionally the stabilizer at $x$ is finite and all $G$-orbits in $U_s$ are 
closed then $x$ is called {\it stable}.  Any point that is not semistable is called {\it unstable}. If there exists a basis $\{s_0,...,s_n\}$ for the space of sections of $L$ over $X$ such that the the map $x\mapsto (s_0(x),...,s_n(x))$ is a closed embedding into $\mathbb{P}^n$ then we say $L$ is {\it very ample}.  If $L^{\otimes m}$ is very ample for some positive $m$, then we say $L$ is {\it ample}.  An algebraic variety $X$ is isomorphic to a quasi-projective variety if and only if there exists an ample line bundle over $X$.  Given a $G$-linearized line bundle $L$ over $X$, there always exists a GIT quotient $X^{ss}_L\to X\quot_L G:=X^{ss}_L\quot G$, where $X^{ss}_L$ is the set of semistable points in $X$.  Moreover, $X\quot_L G$ is in general quasi-projective (see \cite[Theorem 1.10]{MFK} or \cite[Theorem 8.1]{Do}) and is projective if $X$ was projective and $L$ was ample to begin with (see \cite[Proposition 8.1]{Do}).

We begin constructing our compactifications with the case of a free group. Let $\Gamma\,=\,F_r$ be the free group of rank $r$ (we call the {\it standard} presentation of $F_r$ the one with no relations). With respect to the standard presentation, the evaluation map gives a bijection $\hm(F_r,\,G)\,\cong\, G^r$. Therefore, as the adjoint action of $G$ on $G$ extends to $\overline{G}$, the diagonal adjoint action of $G$ on $G^r$ also extends to the product $\overline{G}^{\,r}$.  Precisely, the action of $g\,\in\, G$ sends any $(x_1,\ldots ,x_r)\,\in\, \overline{G}^{\,r}$ to $(gx_1g^{-1},\ldots ,gx_rg^{-1})$. Thus, $\hm(F_r,\,G)$ is an affine Zariski open $G$-invariant subset of the $G$-variety $\overline{G}^{\,r}$; that is, $\overline{G}^{\,r}$ is a compactification of $\hm(F_r,\,G)$.

With respect to an ample line bundle $L$, the GIT quotient $\overline{G}^{\,r}\quot_L G$ is a projective variety.  We claim there is a line bundle that makes it a compactification of $\fX_{F_r}(G)$.

To establish this we prove a lemma that will also be relevant in Section \ref{divisor}, where we discuss divisors.

\begin{lemma}\label{linelemma} Let $G$ be a semisimple algebraic group of adjoint type, and let $\overline{G}$ be the wonderful compactification of $G$.  Then there is an ample line bundle $L$ on $\overline{G}$ so the divisors $\overline{G}\setminus G$ are the zero locus of a $G\times G$-invariant section of $L$.
\end{lemma}

\begin{proof}
We follow the discussion in Section 3 of \cite{DCS}, making some slight notational changes.

Let $H$ be a semisimple adjoint-type algebraic group over a field $\Bbbk$ of arbitrary characteristic (not equal to 2) and let $\tilde{H}$ be a simply-connected cover of $H$.  Let $\iota:\tilde{H}\to H$ be the corresponding central isogeny.  Let $\sigma$ be an involution of $H$ and let  $K=\iota^{-1}(H^\sigma)$ where $H^\sigma$ is the fixed locus of $\sigma$.  Define $X:=\tilde{H}/K$; a {\it symmetric variety}.  In \cite{DP,DS}, a compactification of $X$, denoted $\overline{X}$, is constructed called the {\it wonderful compactification}.  It is a compactification of $X$ that is a $\tilde{H}$-wonderful variety in the sense of Luna \cite{Lu4}.

As noted in Remark \ref{homogeneous}, we can think of $\overline{G}$ as an example of the wonderful compactification of a symmetric variety where $\tilde{H}=\tilde{G}\times \tilde{G}$, $\sigma$ is the involution $(a,b)\mapsto (b,a)$, $H^\sigma=G_\Delta$, and $K$ is the inverse image of $G_\Delta$ by the central isogeny $\iota: \tilde{G}\times \tilde{G}\to G\times G$.  Then $\tilde{H}/K=(\tilde{G}\times \tilde{G})/\iota^{-1}(G_\Delta)\cong(G\times G)/G_\Delta\cong G$.

Returning to the more general setting, let $S$ be a maximal torus in $\tilde{H}$ such that $\sigma(s)= s^{-1}$ for all $s\in S$.  Denote $\Lambda_A=\mathrm{Hom}(A, \Bbbk^*)$ for any abelian group $A$, and let $S_K=S/(S\cap K)$.   In \cite[Sections 2.2 and 3.1]{DCS}, the authors construct a basis for $\Lambda_{S_K}$ consisting of \emph{simple restricted roots} $\tilde{\Delta}=\{\tilde{\alpha}_1,..., \tilde{\alpha}_{\ell}\}$, where $\ell$ is the dimension of $S$.  Let $\Delta_{\overline{X}}$ be the irreducible components of codimension 1 in $\overline{X}\setminus X$ (i.e., the divisors).  It is shown \cite[Theorem 3.2]{DCS} that there is a bijection between $\Delta_{\overline{X}}$ and $\tilde{\Delta}$ given by $D\mapsto j(\mathcal{O}(D))$ where $j:\mathrm{Pic}(\overline{X})\to \Lambda_{S_K}$ is a monomorphism and $\mathcal{O}(D)$ is the line bundle over $\overline{X}$ with section whose zero locus is $D$.  This correspondence extends to a bijection between subsets $\Gamma\subset \tilde{\Delta}$ and the set of $\tilde{H}$-orbit closures defined by $X_\Gamma:=\cap_{\{D|j(\mathcal{O}(D))\in \Gamma\}}D$.

From this, for each $\tilde{\alpha}\in \tilde{\Delta}$ there is a line bundle $\mathcal{L}_{\tilde{\alpha}}$ over $\overline{X}$ and an $\tilde{H}$-invariant section $s_{\tilde{\alpha}}$ of $\mathcal{L}_{\tilde{\alpha}}$ whose divisor is $X_{\tilde{\alpha}}$.  In our setting, $\overline{G}$ and each of its divisors are embedded in a Grassmannian, and so we may take $\mathcal{L}_{\tilde{\alpha}}$ to be ample.  Therefore, $\mathcal{L}_{\tilde{\alpha}_1}\otimes\cdots \otimes \mathcal{L}_{\tilde{\alpha}_{\ell}}$ is an ample line bundle over $\overline{X}$ whose section $s_{\tilde{\alpha}_1}\otimes\cdots \otimes s_{\tilde{\alpha}_{\ell}}$ is $\tilde{H}$-invariant and whose non-zero locus is exactly $X$.

Therefore, the same holds for the special case when $\overline{X}=\overline{G}$.  We note that the $\tilde{G}\times \tilde{G}$-action on $\overline{G}$ factors through the $G\times G$-action we consider given the isomorphism $(\tilde{G}\times \tilde{G})/\iota^{-1}(G_\Delta)\cong(G\times G)/G_\Delta$.

\end{proof}

\begin{theorem}
There exists an ample line bundle $\mathcal{L}$ on $\overline{G}^r$ so that $\overline{G}^{\,r}\quot_{\mathcal{L}} G$ is a compactification of $\fX_{F_r}(G)$.
\end{theorem}

\begin{proof}
Let $L$ be the line bundle on $\overline{G}$ and $s$ the invariant section from Lemma \ref{linelemma}.  Then $\mathcal{L}:=L^{\boxtimes r}$ is an ample line bundle on $\overline{G}^r$ with a $G\times G$-invariant section $s^{\boxtimes r}$ whose non-vanishing locus is $G^r$.  Therefore the GIT quotient $\overline{G}^r\quot_{\mathcal{L}} G$, which is a projective variety, is a compactification of $\fX_{F_r}(G)$.
\end{proof}

\begin{remark}
As in \cite{HeStarr}, which concerned the case of $r=1$, we suspect the above construction is independent of $\mathcal{L}$. Regardless, we will always use the line bundle $\mathcal{L}$ in our constructions, even if the notation is suppressed. 
\end{remark}

Now let $\Gamma$ be a finitely generated group, say with $r$ generators. Fixing $r$ generators,
there is a surjection $\varphi\,:\,F_r\,\longrightarrow\, \Gamma$ that induces an inclusion
$\varphi_\#\,:\, \fX_{\Gamma}(G)\,\hookrightarrow\, \fX_{F_r}(G)$.

\begin{definition}
The {\it wonderful compactification of} $\fX_\Gamma(G)$ is the closure of $\fX_\Gamma(G)$ in $\overline{G}^{\,r}\quot_{\mathcal{L}} G$ with respect to the above inclusion $\varphi_\#$. This compactification will be denoted by $\overline{\fX_\Gamma(G)}$. 
\end{definition}

Up to isomorphism $\fX_\Gamma(G)$ does not depend on $\varphi_\#$, however the compactification $\overline{\fX_\Gamma(G)}$ does depend on the choice of $\varphi$ (see \cite{Martin} for example).  In other words, since a presentation of $\Gamma$ is equivalent to $\varphi$, the compactification depends on a choice of a presentation for $\Gamma$.  

It would be interesting to explore how different presentations of $\Gamma$ change the geometry of the resulting divisors (the Zariski open subvariety $\fX_\Gamma(G)$ does not change up to isomorphism).  

With that said, it is perhaps surprising that some of our theorems concerning $\overline{\fX_\Gamma(G)}$ do not depend on the presentation of $\Gamma$.   Because of this, we will not always specify the presentation of $\Gamma$ in the statement of our theorems.

\begin{theorem}\label{etaletheorem}
With respect to the standard presentation of $F_r$, the wonderful compactification
$\overline{\fX_{F_r}(G)}$ is normal and \'etale simply connected. When $\Bbbk\,=\,\C$ it is
topologically simply connected.
\end{theorem}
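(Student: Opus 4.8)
The plan is to deduce all three conclusions—normality, \'etale simple connectivity, and, over $\C$, topological simple connectivity—by transporting the corresponding properties of $\overline{G}^{\,r}$ across the GIT quotient map $\pi\colon (\overline{G}^{\,r})^{ss}\to \overline{G}^{\,r}\quot G$, noting that for a free group the closure of $\fX_{F_r}(G)=G^r\quot G$ fills up the whole quotient, so $\overline{\fX_{F_r}(G)}=\overline{G}^{\,r}\quot G$. Normality is the quickest part: by Theorem~\ref{wonderfulgroup}(2) the variety $\overline{G}$ is smooth, hence so is $\overline{G}^{\,r}$, and in particular $\overline{G}^{\,r}$ is normal. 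Since $G$ is semisimple and therefore reductive, and a good quotient of a normal variety by a reductive group is again normal (the ring of invariants of an integrally closed ring is integrally closed), the quotient $\overline{\fX_{F_r}(G)}$ is normal.

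For the fundamental group statements I would first record that $\overline{G}^{\,r}$ is simply connected in both senses. By Corollary~\ref{cor-sc}, $\overline{G}$ is \'etale simply connected, and since $\overline{G}$ is proper and geometrically connected the product formula for the \'etale fundamental group gives $\pi_1^{\text{\'et}}(\overline{G}^{\,r})=1$; over $\C$ the analogous statement $\pi_1(\overline{G}^{\,r})=1$ is immediate from $\pi_1(\overline{G})=1$. The second step is to pass to the semistable locus: I would prove that the unstable locus $\overline{G}^{\,r}\setminus(\overline{G}^{\,r})^{ss}$ has codimension at least two. Granting this, purity (Zariski--Nagata) shows that removing a closed subset of codimension $\geq 2$ from the smooth variety $\overline{G}^{\,r}$ leaves the \'etale fundamental group unchanged, and over $\C$ the analogous transversality argument (complex codimension $\geq 2$ means real codimension $\geq 4$) does the same topologically; hence $(\overline{G}^{\,r})^{ss}$ is simply connected in both senses.

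The final step is to descend simple connectivity through $\pi$, using that $G$ is connected and that every fiber of $\pi$ is connected (each contains a unique closed orbit, and orbits are connected since $G$ is). Over $\C$ I would invoke the Kempf--Ness theorem to identify $\overline{\fX_{F_r}(G)}$ with $\mu^{-1}(0)/K$ for a maximal compact $K\subset G$, observe that $\mu^{-1}(0)$ is a deformation retract of $(\overline{G}^{\,r})^{ss}$, and use that a connected compact group acting on a path-connected space induces a surjection $\pi_1(\mu^{-1}(0))\twoheadrightarrow\pi_1(\mu^{-1}(0)/K)$; as the source is trivial, so is the target. For the \'etale statement over arbitrary $\Bbbk$ I would argue by descent: a connected finite \'etale Galois cover $Y\to \overline{\fX_{F_r}(G)}$ pulls back to a $G$-equivariant finite \'etale cover of the simply connected variety $(\overline{G}^{\,r})^{ss}$, which is therefore a trivial cover; the resulting sections are $G$-equivariant, hence constant along the connected fibers of $\pi$, and so descend to a section of $Y$, forcing $\deg(Y\to\overline{\fX_{F_r}(G)})=1$.

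The hard part will be the codimension estimate on the unstable locus. Over the dense open $G^{\,r}\subset\overline{G}^{\,r}$ the irreducible representations form a stable locus with trivial stabilizer (here I use that $G$ is of adjoint type, so its centre is trivial and generic stabilizers vanish), so the destabilized points concentrate on the reducible representations together with the boundary $\overline{G}^{\,r}\setminus G^{\,r}$. The delicate point is that a priori the boundary is only a divisor, so I must verify that the genuinely unstable part of it still has codimension at least two. I would analyze this through the Hilbert--Mumford numerical criterion, testing one-parameter subgroups of $G$ against the explicit linearization induced by the Pl\"ucker and Segre embeddings of $\overline{G}^{\,r}$, and checking that the locus they destabilize has the required codimension. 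This estimate is precisely where the adjoint hypothesis feeds into the conclusion, so establishing it cleanly is the crux of the argument.
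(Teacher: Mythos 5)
Your normality argument is exactly the paper's (smoothness of $\overline{G}^{\,r}$ from Theorem~\ref{wonderfulgroup}, plus the fact that invariants of a normal ring are normal), and your descent machinery (purity, Kempf--Ness, triviality of pulled-back Galois covers along connected fibers) is correct in outline. But the proof as written has a genuine gap, and it is precisely the step you yourself flag as the crux: the claim that the unstable locus of $\overline{G}^{\,r}$ has codimension at least two is never established, only announced as something you ``would analyze'' via the Hilbert--Mumford criterion. Everything in your fundamental-group argument is conditional on this estimate, so without it the proof of the main assertions does not exist. Nor is the estimate a routine verification: the boundary $\overline{G}^{\,r}\setminus G^{\,r}$ is a union of divisors of the form $D_i\times\overline{G}^{\,r-1}$ (up to permuting factors), so you would need to show that the generic point of each such divisor is semistable for the chosen linearization, for every adjoint $G$, every rank, and every $r$ -- and your aside about ``reducible representations'' being the destabilized locus inside $G^{\,r}$ is also not accurate in projective GIT, where semistability is governed by the ample linearization rather than by irreducibility of the representation. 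You would additionally need the argument to work in arbitrary characteristic for the \'etale statement.

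The paper avoids this entire analysis. Its proof of the fundamental-group claims is a direct citation of \cite[Theorem 1]{biswasfundamental} (Biswas--Hogadi--Parameswaran): for a normal projective $G$-variety with linearization and nonempty semistable locus, the GIT quotient map induces an isomorphism on \'etale fundamental groups, and on topological fundamental groups when $\Bbbk=\C$, with \emph{no} hypothesis on the codimension of the unstable locus. Combined with Corollary~\ref{cor-sc} (each factor $\overline{G}$ is rational, smooth, and projective, hence simply connected in both senses, and simple connectivity passes to the finite product $\overline{G}^{\,r}$), this immediately yields that $\overline{G}^{\,r}\quot G$ is \'etale (and, over $\C$, topologically) simply connected. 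The existence of that theorem is exactly what makes your unstable-locus computation unnecessary; conversely, if you insist on a self-contained route, supplying a complete proof of the codimension bound (or of an analogue of the cited theorem) is the nontrivial work your proposal still owes.
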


\begin{proof}
Since the GIT quotient of a smooth variety is normal, and $\overline{G}^{\,r}$ is smooth, it follows
that $\overline{\fX_{F_r}(G)}\cong \overline{G}^{\,r}\quot_{\mathcal{L}} G$ is normal.

The quotient map $\overline{G}^{\,r}\,\longrightarrow\, \overline{G}^{\,r}\quot_{\mathcal{L}} G$ induces an
isomorphism of \'etale fundamental groups (and topological fundamental groups when $
\Bbbk\,=\,\C$) by \cite[Theorem 1]{biswasfundamental}. From Corollary \ref{cor-sc}
we know that $\overline{G}$ is \'etale simply connected and therefore the product $\overline{G}^{\,r}$ is
also \'etale simply connected. Consequently, $\overline{\fX_{F_r}(G)}\cong \overline{G}^{\,r}\quot_{\mathcal{L}} G$ is
\'etale simply connected.

If $\Bbbk\,=\,\C$, then $\overline{G}^{\,r}$ is topologically simply connected by Corollary 
\ref{cor-sc}. Hence $\overline{\fX_{F_r}(G)}$ is
topologically simply connected when $\Bbbk\,=\,\C$.
\end{proof}

\begin{example}\label{rank1}
By \cite[Theorem 0.7]{HeStarr}, in arbitrary characteristic $\overline{\fX_{F_1}(G)}\cong \overline{T}\quot W$ where $\overline{T}$ is the closure of a maximal torus $T\subset G$ in $\overline{G}$, $W\subset G$ is the Weyl group, and the quotient is independent of line bundle.
\end{example}

\begin{example}\label{ex-charvar}
Let $\mathcal{K}:=\Z/2\Z\times \Z/2\Z$ be the Klein 4-group. Consider $\fX_{F_2}(\PSL_2(\C))\cong \fX_{F_2}(\SL_2(\C))\quot\mathcal{K}$. By \cite{Sikora-notsimply}, $$\fX_{F_2}(\PSL_2(\C))\cong\C^3\quot\mathcal{K}\cong \mathrm{Spec}\left(\C[g_1, g_2, g_3, g_4]/(g_1g_2g_3 -g_4^2)\right),$$ where 
$$\fX_{F_2}(\SL_2(\C))\cong \{(\tr(A),\tr(B),\tr(AB))\ \mid\ 
A,B\in \SL_2(\C)\}\cong \C^3,$$ and $g_1$ corresponds to $\tr(A)^2$, $g_2$ to $\tr(B)^2$, $g_3$ to $\tr(AB)^2$, and $g_4$ to $\tr(A)\tr(B)\tr(AB)$. 
Given Example \ref{ex-group},
$\overline{\fX_{F_2}(\PSL_2(\C))}\cong(\C P^3\times \C P^3)\quot_{\mathcal{L}} \PSL_2(\C).$
\end{example}

\begin{remark}
In \cite[Theorem 3.4]{FlLa3} it is shown that to each connected quiver $Q$ and connected 
reductive complex algebraic group $G$, there is an algebraic variety $\mathcal{M}_Q(G)$ 
isomorphic to $\fX_{F_r}(G)$, where $r$ is the first Betti number of $Q$. In 
\cite[Theorem 1.1]{compactmanon} it is shown, in the case where $G$ is simple and simply 
connected, that each such $\mathcal{M}_Q(G)$ determines a generally distinct 
compactification of $\fX_{F_r}(G)$. When $Q$ has exactly one vertex the compactification 
in \cite{compactmanon} reduces to the GIT quotient of a product of compactifications of 
$G$, similar to the construction considered here for $\Gamma\,=\,F_r$. Now the 
compactification of the group $G$ considered in \cite{compactmanon} comes from its 
so-called Rees algebra. As shown in \cite[Example 8.1]{kavehmanon}, this 
compactification of $G$ coincides with the wonderful compactification of $G$. Therefore, 
our construction is a special case of the construction in \cite{compactmanon} in the 
overlapping situation when $\Gamma$ is free, and $G$ is a simple, simply connected, 
complex algebraic group of adjoint type $($exactly if $G$ is one of $G_2$, $F_4$, or 
$E_8$; see \cite{mo-simplyadjoint} for example$)$.
\end{remark}

\begin{remark} \label{commutebar}
In~\cite[Remark 4.6]{Kannan} it is shown that there is a natural isomorphism $\ol{G}^{\,r} \to \ol{G^r}$. 
For any semisimple algebraic group $H$ of adjoint type over an algebraically closed field,
Lusztig~\cite{Lusztig1, Lusztig2} introduced a partition of $\ol{H}$ into finitely many $H$--stable pieces (where $H$ acts by conjugation). Applied to the group $H=G^r \isom \Hom(F_r, G)$, this gives a partition of $\ol{G^r} \isom \ol{G}^{\,r}$ into $G^r$--stable pieces, which are automatically stable under the diagonal conjugation action of the diagonal subgroup $G\cong G_\Delta \subset G^r$. The closures of these $G^r$--stable pieces were investigated by He~\cite{He}.  It would be interesting to understand the images of these sets in $\ol{\fX_{F_r} (G)}$. 
\end{remark}

\section{Simply connected compactifications over \texorpdfstring{$\C$}{C}}

In this section we work over $\C$, and argue that in some cases we can normalize the wonderful compactification of $\fX_\Gamma(G)$ and obtain simply connected compactifications of character varieties when $\Gamma$ is not free.

We need the following standard result; see~\cite{ADH} and the references therein.

\begin{proposition}\label{sc-prop} 
If $Z$ is a normal projective variety, and  $A\,\subsetneq\, Z$ is a closed subvariety, then the natural homomorphism $\pi_1 (Z\setminus A) \,\longrightarrow\, \pi_1 (Z)$ is surjective.
\end{proposition}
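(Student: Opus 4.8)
The plan is to prove surjectivity of the (topological, since we work over $\C$) fundamental group homomorphism by a general--position argument: every loop in $Z$ can be pushed off the proper closed subvariety $A$, because $A$ has real codimension at least two. First I would reduce to the case that $Z$ is irreducible. A connected normal variety is irreducible, so after restricting to the connected component containing the basepoint we may assume $Z$ is irreducible of complex dimension $d$. Because $A\,\subsetneq\, Z$ is a \emph{proper} closed subvariety, we have $\dim_\C A\,\leq\, d-1$, and hence its real dimension satisfies $\dim_\R A\,\leq\, 2d-2\,=\,\dim_\R Z - 2$. In particular $Z\setminus A$ is a nonempty dense open set, and I would fix a basepoint $z_0\,\in\, Z\setminus A$.

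Next I would invoke triangulability of complex algebraic varieties: by the results of Lojasiewicz and Hironaka, the pair $(Z,\,A)$ admits a finite triangulation, that is, a homeomorphism of $Z$ with a finite simplicial complex in which $A$ is realized as a subcomplex. Under this identification $A$ is a subcomplex of real dimension at most $\dim_\R Z - 2$, and $Z$ is a compact piecewise--linear space. This reduces the problem to a purely combinatorial statement and, crucially, sidesteps the singularities of $Z$: although $Z$ need only be normal, the pushing--off step below uses only the simplicial structure and the dimension of $A$, not any smoothness.

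Then I would run the general--position argument. Given a class in $\pi_1(Z,\,z_0)$, represent it by a loop $\gamma\,:\,S^1\,\to\, Z$ based at $z_0$. By simplicial approximation (keeping $z_0$, taken to be a vertex, fixed) I may assume after a homotopy that $\gamma$ is piecewise linear, so that its image is at most one--dimensional. Since
$$\dim(\mathrm{image}\,\gamma) + \dim_\R A \,\leq\, 1 + (\dim_\R Z - 2)\,=\,\dim_\R Z - 1\,<\,\dim_\R Z,$$
piecewise--linear general position allows me to homotope $\gamma$, keeping $z_0\notin A$ fixed throughout, to a loop disjoint from $A$, i.e.\ a loop lying entirely in $Z\setminus A$. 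Hence the given class lies in the image of $\pi_1 (Z\setminus A) \,\longrightarrow\, \pi_1 (Z)$, which proves surjectivity.

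The main obstacle is making the transversality step rigorous in the absence of smoothness: one cannot simply appeal to smooth transversality on the possibly singular space $Z$. The resolution is to work entirely within a fixed triangulation of the pair $(Z,\,A)$ and to replace smooth transversality by piecewise--linear general position for maps of a $1$--complex into a simplicial complex avoiding a subcomplex of codimension $\geq 2$; this is the technical heart of the argument, while the dimension bookkeeping and the reduction to the irreducible case are routine.
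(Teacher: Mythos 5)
Your argument has a genuine gap at its central step, and it is exactly the step you flag as the technical heart: piecewise-linear general position for pushing a $1$-complex off a subcomplex of real codimension $\geq 2$ is a theorem about PL \emph{manifolds} (or spaces with suitable local co-connectivity), not about arbitrary finite simplicial complexes, and a triangulated normal variety is in general not a manifold. The decisive test is that your proof, as you yourself note, uses nothing about $Z$ beyond the codimension of $A$ --- but the proposition is false without normality, so no such proof can work. Concretely, let $Z$ be a nodal cubic curve and $A$ its node: topologically $Z$ is $S^2$ with two points identified, so $\pi_1(Z)\cong \Z$, while $Z\setminus A\cong \C^*$ and the generator of $\pi_1(Z\setminus A)$ is a small meridian circle around the node, which bounds a disk inside one branch of $Z$ and hence dies in $\pi_1(Z)$; thus $\pi_1(Z\setminus A)\to\pi_1(Z)$ is the zero map $\Z\to\Z$ and is not surjective. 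Here $\dim_\R A=0=\dim_\R Z-2$, so your dimension count $1+\dim_\R A<\dim_\R Z$ is satisfied, yet the generating loop through the node cannot be pushed off $A$: locally $Z$ is two disks glued at a point, the punctured neighborhood of the node is disconnected, and an arc crossing from one branch to the other is trapped. This is precisely the failure of general position in a non-manifold complex.

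The missing ingredient is where normality must enter. A normal complex variety is analytically locally irreducible, so every point admits a basis of contractible (e.g.\ cone/star) neighborhoods $U$ with $U\setminus A$ connected; one then subdivides a loop $\gamma$ into small arcs, each contained in such a $U$, and replaces every arc meeting $A$ by an arc in $U\setminus A$ with the same endpoints, which is homotopic to the original rel endpoints inside the contractible $U$. Iterating produces a loop in $Z\setminus A$ homotopic to $\gamma$ in $Z$, giving surjectivity. This local-connectivity argument (rather than transversality) is the substance of the result; note that the paper itself gives no proof, citing instead the paper of Arapura--Dimca--Hain and the references therein, where the statement is established in exactly this spirit. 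Your triangulation setup and the reduction to the irreducible case are fine and can be retained; what must be replaced is the general-position step, substituting the normality-based local irreducibility argument above.
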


\begin{corollary} \label{sc-cor}
Let $G$ be a semisimple algebraic group of adjoint type over $\C$, and let $\Gamma$ be either a
finitely generated free or free abelian group of rank $r$, or the fundamental group of a closed,
orientable surface. If $\overline{\fX_\Gamma (G)}$ is a normal compactification of $\fX_\Gamma (G)$,
then $\overline{\fX_\Gamma (G)}$ is simply connected. Consequently, $\overline{\fX_\Gamma (G)}$
is also \'etale simply connected.
\end{corollary}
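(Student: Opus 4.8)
The plan is to realize $\overline{\fX_\Gamma(G)}$ as a normal projective variety whose complement of a dense open simply connected subvariety is a proper closed set, and then to feed this into Proposition~\ref{sc-prop}. Write $Z\,:=\,\overline{\fX_\Gamma(G)}$. It is projective, being closed in the projective variety $\overline{G}^{\,r}\quot G$, and it is normal by hypothesis. Since $G^r$ is a $G$-invariant Zariski open subset of $\overline{G}^{\,r}$, its image $\fX_{F_r}(G)\,=\,G^r\quot G$ is Zariski open in $\overline{G}^{\,r}\quot G$; moreover $\fX_\Gamma(G)$ is \emph{closed} in $\fX_{F_r}(G)$, because $\hm(\Gamma,G)\hookrightarrow\hm(F_r,G)\,=\,G^r$ is a closed $G$-invariant embedding and taking invariants is exact for the reductive group $G$. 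As $Z$ is by definition the closure of $\fX_\Gamma(G)$, it follows that $Z\cap\fX_{F_r}(G)\,=\,\fX_\Gamma(G)$, so $\fX_\Gamma(G)$ is open in $Z$ and its complement $A\,:=\,Z\setminus\fX_\Gamma(G)$ is a closed subvariety, proper because $\fX_\Gamma(G)$ is nonempty and dense. Proposition~\ref{sc-prop} then yields that the natural map $\pi_1(\fX_\Gamma(G))\,\longrightarrow\,\pi_1(Z)$ is surjective.

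The crucial input, and the point at which the hypotheses on $\Gamma$ are used, is that $\fX_\Gamma(G)$ is itself simply connected for $G$ semisimple when $\Gamma$ is free, free abelian, or a closed orientable surface group. I would import this from the existing computations of fundamental groups of character varieties of Biswas--Lawton--Ramras, which treat exactly these three families; since $G$ is a product of simple adjoint factors and $\fX_\Gamma(G)$ correspondingly decomposes as a product, it suffices to know the statement one factor at a time. Granting this, a surjection from a trivial group forces $\pi_1(Z)\,=\,1$, so $Z\,=\,\overline{\fX_\Gamma(G)}$ is simply connected. I expect this citation to carry the genuine mathematical weight: the geometric set-up and the application of Proposition~\ref{sc-prop} are formal, whereas the simple connectivity of the open character variety is the substantive ingredient and hence the main obstacle.

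Finally, for the last assertion I would pass to the \'etale statement by the comparison recalled before Corollary~\ref{cor-sc}. Being simply connected, $Z$ is in particular connected; and a connected normal variety is irreducible, since normality forces the irreducible components to be pairwise disjoint. Thus $Z$ is an irreducible normal projective variety over $\C$ with trivial topological fundamental group, so by \cite{Milne} its \'etale fundamental group is trivial as well, i.e.\ $\overline{\fX_\Gamma(G)}$ is \'etale simply connected.
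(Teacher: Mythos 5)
Your proposal is correct and follows essentially the same route as the paper: the paper's proof simply cites \cite{BiLa, BLR} for $\pi_1(\fX_\Gamma(G))=1$ and applies Proposition~\ref{sc-prop}, exactly as you do, with the \'etale statement following from the topological one via \cite{Milne}. The extra details you supply (openness of $\fX_\Gamma(G)$ in its closure via closedness of $\hm(\Gamma,G)\hookrightarrow G^r$ and surjectivity of invariants, and irreducibility of the normal connected $Z$) are correct verifications of hypotheses the paper leaves implicit.
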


\begin{proof}
For the allowed $G$ and $\Gamma$, it is shown in~\cite{BiLa, BLR} that $\pi_1 (\fX_\Gamma (G)) \,=\,
1$. The result now follows from Proposition~\ref{sc-prop}.
\end{proof}

The following two lemmas are standard.

\begin{lemma}\label{normal-lem}
If $A \subset Z$ is a nonempty Zariski open normal subset of an irreducible projective variety
$Z$, then the normalization $\widetilde{Z}$ of $Z$ contains an open subset
isomorphic to $A$. In particular, $\widetilde{Z}$ is still a
compactification of $A$.
\end{lemma}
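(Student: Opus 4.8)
The plan is to exploit the standard fact that the normalization morphism is an isomorphism over the normal locus of $Z$. Write $\nu \colon \widetilde{Z} \to Z$ for the normalization. Since $Z$ is an irreducible, hence integral, variety over a field, $\nu$ is a finite birational morphism and $\widetilde{Z}$ is again integral; in particular $\widetilde{Z}$ is an irreducible normal variety. The hypothesis that $A$ is a \emph{normal} open subset should be read as saying that $A$, with its open subscheme structure, is a normal variety. The first step is then to observe that $A$ lies inside the normal locus $Z_{\mathrm{nor}} := \{z \in Z \colon \mathcal{O}_{Z,z}\ \text{is integrally closed}\}$: because $A$ is open in $Z$, we have $\mathcal{O}_{A,a} = \mathcal{O}_{Z,a}$ for every $a \in A$, and normality of $A$ says each such local ring is integrally closed, so $A \subseteq Z_{\mathrm{nor}}$.

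Next I would invoke the key property of normalization, namely that $\nu$ restricts to an isomorphism over $Z_{\mathrm{nor}}$ (which is itself open, as varieties over a field are excellent). Restricting this isomorphism to the open subset $A \subseteq Z_{\mathrm{nor}}$ gives an isomorphism $\nu^{-1}(A) \xrightarrow{\ \sim\ } A$. Since $\nu$ is a morphism and $A$ is open in $Z$, the preimage $\nu^{-1}(A)$ is open in $\widetilde{Z}$, so $\widetilde{Z}$ indeed contains an open subvariety isomorphic to $A$. To see that this exhibits $\widetilde{Z}$ as a compactification of $A$, I would check two routine points. First, $\widetilde{Z}$ is projective: $\nu$ is finite, hence projective, and the pullback along the finite map $\nu$ of an ample line bundle on the projective variety $Z$ is ample, so $\widetilde{Z}$ is projective over the base field. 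Second, $\nu^{-1}(A)$ is dense in $\widetilde{Z}$: the subset $A$ is nonempty and open in the irreducible $Z$, hence dense, and likewise $\nu^{-1}(A)$ is nonempty and open in the irreducible $\widetilde{Z}$, hence dense there. Thus $\widetilde{Z}$ is a projective variety containing a dense open copy of $A$, i.e. a compactification of $A$.

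I expect there to be no genuine obstacle here: the entire argument rests on the single structural fact that normalization is an isomorphism over the normal locus, with everything else being formal. The only points requiring care are the correct interpretation of ``normal subset'' that yields the containment $A \subseteq Z_{\mathrm{nor}}$, and the standard remark that the normalization of a projective variety is again projective.
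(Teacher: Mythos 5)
Your proof is correct; the paper in fact gives no argument for this lemma (it is introduced with ``The following two lemmas are standard''), and yours is exactly the standard argument intended: the normalization map $\nu\colon \widetilde{Z}\to Z$ restricts to an isomorphism over the normal locus, which contains $A$, so $\nu^{-1}(A)\cong A$ is a dense open subset of the projective variety $\widetilde{Z}$. The points you single out for care --- the containment $A\subseteq Z_{\mathrm{nor}}$ via equality of local rings on an open subset, projectivity of $\widetilde{Z}$ from finiteness of $\nu$ together with ampleness of the pullback of an ample line bundle, and density of $\nu^{-1}(A)$ from irreducibility of $\widetilde{Z}$ --- are all handled correctly, so the write-up is complete as it stands.
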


\begin{lemma}\label{normalproduct}
Let $X$ and $Y$ be normal varieties over an algebraically closed field $\Bbbk$.
Then $X\times Y$ is also normal.
\end{lemma}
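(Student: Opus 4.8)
The plan is to invoke Serre's criterion for normality: a Noetherian integral domain is normal if and only if it satisfies the conditions $(R_1)$ (regular in codimension one) and $(S_2)$. Since normality is a local property, I would first cover $X$ and $Y$ by affine open subsets and reduce to showing that $\mathrm{Spec}(A\otimes_\Bbbk B)$ is normal, where $A$ and $B$ are coordinate rings of affine normal charts, that is, normal domains of finite type over $\Bbbk$. The first point to record is that, because $\Bbbk$ is algebraically closed, $A$ and $B$ are geometrically integral, so $A\otimes_\Bbbk B$ is again an integral domain and $X\times Y$ is a variety; this is exactly where algebraic closedness enters to guarantee irreducibility and reducedness of the product.

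For the condition $(R_1)$, I would use that an algebraically closed field is perfect, so at every point regularity coincides with smoothness over $\Bbbk$. Smoothness is stable under products, and away from the singular loci the two projections are smooth, giving
$$\mathrm{Sing}(X\times Y)\;=\;(\mathrm{Sing}(X)\times Y)\,\cup\,(X\times \mathrm{Sing}(Y)).$$
Since $X$ and $Y$ are normal they satisfy $(R_1)$, so $\codim \mathrm{Sing}(X)\geq 2$ and $\codim \mathrm{Sing}(Y)\geq 2$; as codimension is additive in products over $\Bbbk$, each piece above has codimension at least two, and hence so does $\mathrm{Sing}(X\times Y)$. This establishes $(R_1)$ for the product.

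For the condition $(S_2)$, I would exploit that over the perfect field $\Bbbk$ normality is geometric, so the fibers of the flat structure map $A\to A\otimes_\Bbbk B$, which are of the form $\kappa(\mathfrak p)\otimes_\Bbbk B$ for primes $\mathfrak p$ of $A$, again satisfy $(S_2)$. Serre's condition ascends along a flat morphism whose fibers satisfy $(S_2)$, and $A$ itself satisfies $(S_2)$ by normality, so $A\otimes_\Bbbk B$ satisfies $(S_2)$ as well. Combining $(R_1)$ and $(S_2)$ with Serre's criterion shows that $A\otimes_\Bbbk B$ is normal, and gluing the affine charts yields normality of $X\times Y$.

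The main obstacle is the verification of $(S_2)$: unlike the $(R_1)$ argument, which reduces to a clean statement about the geometry of singular loci, it requires tracking depth under the flat base change $A\to A\otimes_\Bbbk B$ and confirming that the fibers satisfy $(S_2)$, for which the perfectness of $\Bbbk$ (ensuring that normality of $B$ is preserved after extending scalars to the residue fields $\kappa(\mathfrak p)$) is doing essential work. The domain property of the tensor product is the other place where algebraic closedness of $\Bbbk$ is genuinely needed, so both hypotheses on $\Bbbk$ are used in the argument.
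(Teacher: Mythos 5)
Your proof is correct, and there is in fact no argument in the paper to compare it against: the paper states this lemma with only the remark that it is ``standard'' and omits the proof entirely, so what you have written supplies a complete version of one of the standard arguments. Your route through Serre's criterion is sound in all its steps: the reduction to affine charts $\mathrm{Spec}(A\otimes_\Bbbk B)$ is legitimate since normality is local; integrality of $A\otimes_\Bbbk B$ does use algebraic closedness of $\Bbbk$; the singular-locus identity $\mathrm{Sing}(X\times Y)=(\mathrm{Sing}(X)\times Y)\cup(X\times \mathrm{Sing}(Y))$ holds, though you gloss the inclusion $\supseteq$, which requires faithfully flat descent of regularity along the local homomorphisms $\mathcal{O}_{X,x}\to \mathcal{O}_{X\times Y,(x,y)}$ (standard, but worth naming); additivity of dimension then gives $(R_1)$; and the ascent of $(S_2)$ along the flat map $A\to A\otimes_\Bbbk B$ with fibers $\kappa(\mathfrak{p})\otimes_\Bbbk B$ works as you say, with perfectness of $\Bbbk$ guaranteeing that normality of $B$ is geometric --- note only that these fibers need not be integral over non-closed points, but a normal Noetherian ring is still a finite product of normal domains and so satisfies $(S_2)$, so nothing breaks. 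For comparison, the more economical standard route merges your two verifications into a single citation: the projection $X\times Y\to X$ is flat with normal base and geometrically normal fibers, and normality ascends along such morphisms (EGA IV, 6.5.4; equivalently, $A\otimes_\Bbbk B$ is normal when $A$ is normal and $B$ is geometrically normal, EGA IV, 6.8.5, or Stacks Project, Tag 06DF). Your proof essentially reproves that ascent by decomposing it into its $(R_1)$ and $(S_2)$ components, which costs more work but has the virtue of making visible exactly where each hypothesis on $\Bbbk$ enters --- algebraic closedness for integrality of the product, perfectness for geometric normality of the fibers.
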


With the above lemmas and corollary in mind, we define the normalized wonderful compactification 
of a normal character variety $\fX_\Gamma(G)$ to be the normalization of 
$\overline{\fX_\Gamma(G)}$.

\begin{proposition} \label{sc-wonderful-cor}
Let $\fX^0_\Gamma (G)$ denote the component of $\fX_\Gamma (G)$ that contains the trivial
representation. In the following cases, the normalized wonderful compactification of
$\fX^0_\Gamma (G)$ is a simply connected compactification of $\fX^0_\Gamma (G)$ independent of the presentation of $\Gamma:$ 
\begin{enumerate}
\item $\Gamma \,=\, \bbZ^r$ and $G$ is any semisimple algebraic adjoint group with no exceptional
factors $;$

\item $\Gamma \,=\, \pi_1(\Sigma)$, with $\Sigma$ a closed orientable surface, and
$G \,=\, \mathrm{PGL}_n$.
\end{enumerate}
\end{proposition}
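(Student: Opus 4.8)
The plan is to deduce the statement from the machinery already assembled in Sections~3--4, reducing each case to two facts about the identity component: that $\fX^0_\Gamma(G)$ is a \emph{normal} variety, and that it is \emph{simply connected}. Granting these, the argument runs as in Corollary~\ref{sc-cor}. Let $Z$ denote the Zariski closure of $\fX^0_\Gamma(G)$ inside $\overline{G}^{\,r}\quot G$; it is an irreducible projective variety containing $\fX^0_\Gamma(G)$ as a dense Zariski-open subset. If $\fX^0_\Gamma(G)$ is normal, then by Lemma~\ref{normal-lem} its normalization $\widetilde{Z}$ contains an open subset isomorphic to $\fX^0_\Gamma(G)$, so $\widetilde{Z}$ is a normal projective compactification of $\fX^0_\Gamma(G)$, i.e.\ the normalized wonderful compactification. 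Applying Proposition~\ref{sc-prop} to $\widetilde{Z}$ and the proper closed subvariety $A \,=\, \widetilde{Z}\setminus \fX^0_\Gamma(G)$ shows that $\pi_1(\fX^0_\Gamma(G)) \to \pi_1(\widetilde{Z})$ is surjective; since the source is trivial, $\widetilde{Z}$ is simply connected. Thus the whole content is to verify normality and simple connectivity of $\fX^0_\Gamma(G)$ in the two cases.

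For case (1) I would first identify $\fX^0_{\bbZ^r}(G)$ with $T^r/W$, where $T$ is a maximal torus and $W$ the Weyl group as in Example~\ref{rank1}. The identity component of $\Hom(\bbZ^r,G)$ consists of the tuples conjugate into $T$ precisely when $G$ has no exceptional factors --- this is where the hypothesis enters, since exceptional factors admit non-toral commuting tuples connected to the trivial representation --- and then passing to the GIT quotient gives $\fX^0_{\bbZ^r}(G)\cong T^r/W$, as carried out in~\cite{BLR}. Normality is then immediate: $T$ is smooth, so $T^r$ is normal by Lemma~\ref{normalproduct}, and the quotient of a normal variety by a finite group is normal. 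For simple connectivity I would use the standard description of the fundamental group of an orbit space: the universal cover of $T^r$ is $(\mathrm{Lie}\,T)^r$ with deck group the translation lattice $X_*(T)^r$, and $W$ lifts to the properly discontinuous action of $W\ltimes X_*(T)^r$. A Coxeter element $c\in W$ has no nonzero fixed vector on $\mathrm{Lie}\,T$, so $I-c$ is invertible and every lattice translation acquires a fixed point on the cover; together with the reflections, which generate $W$, this forces the subgroup generated by elements with fixed points to be everything, whence $\pi_1(T^r/W)=1$. (Alternatively one cites the computation of $\pi_1(\fX^0_{\bbZ^r}(G))$ in~\cite{BLR}.)

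For case (2), with $\Gamma=\pi_1\Sigma$ for $\Sigma$ a closed orientable surface and $G=\PGL_n$, both inputs are supplied by the literature rather than by a torus model. Normality of the component $\fX^0_{\pi_1\Sigma}(\PGL_n)$ follows from its identification with a moduli space of semistable principal $\PGL_n$-bundles over a Riemann surface (equivalently, $\SL_n$-representations of fixed central twist modulo the center), which is a normal projective variety; and the vanishing $\pi_1(\fX^0_{\pi_1\Sigma}(\PGL_n))=1$ is exactly the surface-group computation of~\cite{BiLa, BLR} invoked in Corollary~\ref{sc-cor}. With normality and simple connectivity in hand, the reduction of the first paragraph applies verbatim.

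The main obstacle is not the compactification step --- that is formal once normality is known --- but the two inputs on $\fX^0_\Gamma(G)$ itself. In the abelian case the delicate point is the identification $\fX^0_{\bbZ^r}(G)\cong T^r/W$: the simple connectivity of $T^r/W$ holds for every adjoint $G$ via the Coxeter element, so the sole role of the ``no exceptional factors'' hypothesis is to guarantee that the identity component is toral, and verifying this (ruling out non-toral commuting tuples connected to the trivial representation) is the crux. In the surface case the delicate point is instead normality of the character-variety component, for which one must appeal to the moduli-theoretic description. Both are established in~\cite{BiLa, BLR}, so the proposition is essentially a packaging of those results through Lemma~\ref{normal-lem} and Proposition~\ref{sc-prop}.
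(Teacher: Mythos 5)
Your global skeleton is exactly the paper's: reduce everything to normality of $\fX^0_\Gamma(G)$, then invoke Corollary~\ref{sc-cor} (which supplies $\pi_1(\fX_\Gamma(G))=1$ from \cite{BiLa, BLR}) together with Lemma~\ref{normal-lem}. The gaps are in your normality verifications, which is where the actual content lies. In case (1), your key claim --- that the identity component of $\Hom(\bbZ^r,G)$ consists of tuples conjugate into $T$, so that $\fX^0_{\bbZ^r}(G)\cong T^r/W$ and normality is automatic --- fails twice over. First, the set-theoretic statement is false: a tuple of commuting unipotents is joined to the trivial tuple through one-parameter subgroups, hence lies in the identity component without being toral; at best the component is related to the closure of $G\cdot T^r$. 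Second, and more seriously, even where the natural map $T^r/W \to \fX^0_{\bbZ^r}(G)$ is bijective it is in general only the \emph{normalization} map; upgrading it to an isomorphism of varieties (equivalently, surjectivity of the restriction $\C[\Hom(\bbZ^r,G)^0]^G \to \C[T^r]^W$) is precisely the content of Sikora's theorem \cite[Theorem 2.1]{Si6}, proved for $\SL_n$, $\GL_n$, $\mathrm{SO}_n$, $\mathrm{Sp}_n$ --- this, not torality, is where ``no exceptional factors'' enters. The distinction is not pedantic here: Lemma~\ref{normal-lem} needs $\fX^0$ itself to be normal, for otherwise the normalized compactification contains the normalization of $\fX^0$ rather than $\fX^0$, and the proposition's conclusion fails to be about the right space. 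Since Sikora's theorem covers the classical groups but not their adjoint forms, the paper adds a descent step you are missing: the left action of $Z(G)$ commutes with conjugation, giving $\fX_{\bbZ^r}(G/Z(G))\cong \fX_{\bbZ^r}(G)/Z(G)^r$, and normality passes to GIT quotients; products are then handled by Lemma~\ref{normalproduct}. (Your Coxeter-element computation of $\pi_1(T^r/W)$ is correct but superfluous, since simple connectivity is taken from \cite{BLR} via Corollary~\ref{sc-cor}.)

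In case (2) your normality input is wrong as stated. The character variety $\fX^0_{\pi_1\Sigma}(\PGL_n)$ is affine, not projective, and its identification with a moduli space of semistable bundles is the nonabelian Hodge correspondence, a homeomorphism that does not transport normality (an analytic-local, non-topological property). The paper's actual mechanism is Betti-side throughout: Simpson's theorem \cite{Simpson1, Simpson2} that the representation variety $\Hom(\pi_1\Sigma, \GL_n)$ is normal; then the group $\mathcal{Z}=\Hom(\pi_1\Sigma, Z(\GL_n))\cong \mathbb{G}_m^{b_1(\Sigma)}$ acts by left multiplication with $\Hom(\pi_1\Sigma,\GL_n)\quot\mathcal{Z}\cong \Hom^0(\pi_1\Sigma,\PGL_n)$, and two applications of ``GIT quotients of normal varieties are normal'' (first by $\mathcal{Z}$, then by conjugation) yield normality of $\fX^0_{\pi_1\Sigma}(\PGL_n)$. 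Your parenthetical about $\SL_n$-representations with fixed central twist gestures at a correct Betti-side reformulation, but it still requires normality of those twisted representation varieties --- again Simpson's theorem, not a formal consequence of any moduli identification --- so as written the proof of case (2) does not close.
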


\begin{proof} 
We will show that in both these cases, the character variety $\fX_\Gamma (G)$ is normal. The 
result will then follow from Corollary~\ref{sc-cor} and Lemma~\ref{normal-lem}.

When $G \,=\, \mathrm{SL}_n, \mathrm{GL}_n$, $\mathrm{SO}_n$, or $\mathrm{Sp}_{2n}$, 
Sikora has shown that $\fX^0_{\bbZ^r} (G)$ is normal \cite[Theorem 2.1]{Si6}. Now since the 
left action of the center of $G$, denoted $Z(G)$, commutes with the conjugation action of $G$ on 
$\hm(\bbZ^r, G)$, we conclude $\fX_{\bbZ^r}(G/Z(G))\,\cong\, \fX_{\bbZ^r}(G)/Z(G)^r$. In view
of this, since 
normality is preserved under GIT quotients $\fX_{\bbZ^r} (G)$ is likewise normal for $G \,=\, 
\mathrm{PSL}_n\cong\mathrm{PGL}_n$, $\mathrm{PSO}_n$, or $\mathrm{PSp}_{2n}$.

Now let $G$ be a semisimple algebraic adjoint group with no exceptional factors. Then $G\,\cong \,
G_1\times \cdots \times G_n$, where each $G_i$ is isomorphic to a simple algebraic adjoint group
of type $A_n,\,B_n,\,C_n,\,D_n$. By Lemma \ref{normalproduct} and the previous paragraph
$\fX_{\bbZ^r} (G_1\times \cdots \times G_n)\,\cong\,\fX_{\bbZ^r} (G_1)\times \cdots
\times \fX_{\bbZ^r} (G_n)$ is normal.

In the second case, it is a result of Simpson that $\mathrm{Hom}(\pi_1(\Sigma), \GL_n)$ is a normal variety (see \cite{Simpson1, Simpson2}) . The group $\mathcal{Z} \,=\, 
\mathrm{Hom}(\pi_1(\Sigma), \,Z(\GL_n))$, which is isomorphic to
${\mathbb G}^{b_1(\Sigma)}_m$, acts on $\mathrm{Hom}(\pi_1(\Sigma), \,
\GL_n)$ by left multiplication, and we have $$\mathrm{Hom}(\pi_1 \Sigma,\, \GL_n)\quot \mathcal{Z} 
\,\cong\, \mathrm{Hom}^0(\pi_1(\Sigma),\, \mathrm{PGL}_n)\, ,$$ where the right-hand side denotes the 
identity component. Since the GIT quotient of a normal variety is normal, we find $\mathrm{Hom}^0(\pi_1(\Sigma),\, \mathrm{PGL}_n)$, and consequently $\fX^0_{\pi_1(\Sigma)} (\mathrm{PGL}_n)$, are normal.
\end{proof}

In \cite{BLR} we conjecture that for certain groups $\Gamma$ whose abelianization is free abelian 
(which we call {\it exponent canceling groups}), that $\fX^0_\Gamma(G)$ is simply connected (see 
\cite[Conjecture 2.7]{BLR}). We also expect that $\fX^0_\Gamma(G)$ is normal in these cases. 
Consequently, we now make:

\begin{conjecture}
The normalized wonderful compactification of $\fX^0_\Gamma (G)$ is a simply connected 
compactification of $\fX^0_\Gamma (G)$ for all exponent canceling $\Gamma$ and any 
semisimple adjoint type complex algebraic group $G$.
\end{conjecture}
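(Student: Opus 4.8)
The plan is to run the same three-step template that powers Corollary~\ref{sc-cor} and Proposition~\ref{sc-wonderful-cor}, and thereby reduce the conjecture to two separate properties of the trivial-representation component $\fX^0_\Gamma(G)$: that it is \emph{normal}, and that it is \emph{simply connected}. Granting these, write $Z\,=\,\overline{\fX^0_\Gamma(G)}$ for the closure inside $\overline{G}^{\,r}\quot G$ and let $\widetilde{Z}$ be its normalization, which is a normal irreducible projective variety. Since $\fX^0_\Gamma(G)$ is a normal Zariski open subset of $Z$, Lemma~\ref{normal-lem} identifies it with a dense open subset of $\widetilde{Z}$, so $\widetilde{Z}$ really is a compactification of $\fX^0_\Gamma(G)$. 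Applying Proposition~\ref{sc-prop} to $\widetilde{Z}$ and the closed complement $A\,=\,\widetilde{Z}\setminus \fX^0_\Gamma(G)$ yields a surjection $\pi_1(\fX^0_\Gamma(G))\,\twoheadrightarrow\,\pi_1(\widetilde{Z})$, and simple connectivity of $\fX^0_\Gamma(G)$ then forces $\pi_1(\widetilde{Z})\,=\,1$; \'etale simple connectivity follows exactly as in Corollary~\ref{sc-cor}. Thus the conjecture is equivalent to establishing normality and simple connectivity of $\fX^0_\Gamma(G)$ for every exponent canceling $\Gamma$.

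The simple connectivity input is precisely \cite[Conjecture~2.7]{BLR}, and this is where I expect the real difficulty to lie. My approach would be to generalize the deformation arguments used for free abelian and surface groups in \cite{BiLa, BLR}: the defining feature of an exponent canceling presentation is that each relator has zero exponent sum in every generator, so the abelianization $\Gamma\,\to\,\bbZ^r$ is compatible with scaling in a way that should let one build an algebraic (or continuous) family of representations degenerating an arbitrary point of $\fX^0_\Gamma(G)$ to the class of the trivial representation. Concretely, I would try to produce a contracting $\C^*$-action (or a homotopy) whose only limit on $\fX^0_\Gamma(G)$ is the trivial representation, so that every loop can be swept into a contractible neighborhood of that basepoint, with the exponent cancellation being exactly the hypothesis that keeps the relators satisfied along the family. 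An alternative would be to control $\pi_1(\fX^0_\Gamma(G))$ directly through the surjection from $\pi_1$ of the smooth representation locus, combined with a dimension bound showing that the singular and reducible loci have sufficiently high codimension, mirroring the stratification arguments already used in \cite{BLR}.

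For normality, the natural route is the one exploited in the proof of Proposition~\ref{sc-wonderful-cor}: establish that the relevant representation variety $\hm^0(\Gamma, G)$ (or a suitable cover of it) is normal, and then use that normality descends to GIT quotients, together with Lemma~\ref{normalproduct} to reduce to the simple factors of $G$ and argue type by type, as in the Sikora-type results invoked there for $\bbZ^r$. The main obstacle, however, is unquestionably the simple connectivity statement: normality is expected and seems plausibly within reach of present techniques, whereas the vanishing of $\pi_1(\fX^0_\Gamma(G))$ for general exponent canceling $\Gamma$ is genuinely open and its full resolution would settle \cite[Conjecture~2.7]{BLR}. The cases of free, free abelian, and surface groups recorded in Corollary~\ref{sc-cor} serve both as evidence and as the base patterns to which any general deformation argument must specialize.
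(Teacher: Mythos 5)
You have correctly diagnosed the status of this statement: it is a conjecture, and the paper offers no proof of it. Your first paragraph is exactly the reduction the authors have in mind --- the sentences immediately preceding the conjecture say that they expect $\fX^0_\Gamma(G)$ to be simply connected (this is \cite[Conjecture 2.7]{BLR}) and normal, and the word ``Consequently'' signals that the conjecture would then follow by the same three-step template as Proposition~\ref{sc-wonderful-cor}: Lemma~\ref{normal-lem} makes the normalization $\widetilde{Z}$ a genuine compactification of $\fX^0_\Gamma(G)$, Proposition~\ref{sc-prop} gives the surjection $\pi_1(\fX^0_\Gamma(G))\twoheadrightarrow\pi_1(\widetilde{Z})$, and the \'etale statement follows as in Corollary~\ref{sc-cor}. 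One correction: this shows the two inputs are \emph{sufficient}, not that the conjecture is ``equivalent'' to them. The surjection does not run backwards, so $\pi_1(\widetilde{Z})=1$ would not force $\pi_1(\fX^0_\Gamma(G))=1$; moreover normality of $\fX^0_\Gamma(G)$ is needed here only so that Lemma~\ref{normal-lem} applies (indeed the paper defines the normalized wonderful compactification only for normal character varieties), so the conjecture is a priori weaker than the conjunction of the two inputs.

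That said, the proposal is not --- and you concede this --- a proof: both inputs remain open, and they constitute the entire mathematical content of the statement. Your sketched strategies for closing the gap would not work as written. A semisimple group $G$ admits no nonconstant algebraic $\C^*$-scaling compatible with its multiplication, so there is no evident $\C^*$-action on $\hm(\Gamma,G)$ contracting an arbitrary class in $\fX^0_\Gamma(G)$ to the trivial representation while keeping the relators satisfied; exponent cancellation provides one-parameter degenerations of particular (e.g.\ diagonalizable) tuples, which is what places the trivial representation in the component, but not a global contraction. The known cases do not proceed by such a flow: simple connectivity in \cite{BiLa, BLR} and normality via Sikora \cite{Si6} for free abelian groups and Simpson \cite{Simpson1, Simpson2} for surface groups all exploit case-specific structure ($\GL_n$-covers, classical-group classifications) with no analogue for a general exponent canceling $\Gamma$, for which $\hm^0(\Gamma,G)$ may well be singular or non-normal. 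So your proposal faithfully reconstructs the authors' intended reduction and correctly locates the difficulty, but it leaves the core of the conjecture --- simple connectivity and normality of $\fX^0_\Gamma(G)$ --- exactly as open as the paper does.
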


\section{Boundary Divisors}\label{divisor}

In this section we continue to work over $\C$. Given a complex projective variety $X$ with a distinguished dense open affine subvariety $A \subset X$, we will use the term \e{boundary divisor} to refer to hypersurfaces of $X$ (that is, irreducible codimension 1 subvarieties) contained in $X\setminus A$. By Theorem \ref{wonderfulgroup}, the complement $\overline{G}\setminus G$ is a union of $r\,=\,\mathrm{rank}(G)$ smooth boundary divisors, and each of these divisors is the closure of a $G\times G$-orbit.

Now let $D_i$ be a boundary divisor of $\overline{G}$. Then there exist $$\mathfrak{m}_{I_1},\ldots, \mathfrak{m}_{I_{m_i}}\, \in\, \mathrm{Gr}(n, \mathfrak{g} \times \mathfrak{g})\, ,$$ where each $I_j\subset\{1,\ldots , r\}$, so that $$D_i\,=\,\cup_j(G\times G)\cdot \mathfrak{m}_{I_j}\,\cong\,\cup_j(G\times G)/\mathrm{Stab}(\mathfrak{m}_{I_j})\, .$$ In particular, each boundary divisor is isomorphic to a union of homogeneous spaces, each a quotient by a closed subgroup (since stabilizers of algebraic group actions are always algebraic subgroups).

Given a surjective, continuous map $q\co X\to Y$, we say that $A \subset X$ is saturated with respect to $q$ if $A = q^{-1} (q(A))$.
 
\begin{lemma}\label{gitdivisor}
Let $V$ be an affine $G$-variety and $W$ a compactification of $V$ on which the $G$-action extends. Let $L$ be an ample line bundle with a $G$-invariant section whose non-zero locus is exactly $V$. Assume that each boundary divisor of $W$ is saturated with respect to the GIT quotient map $W\to W\quot_L G$.  Then the boundary divisors of $W\quot_L G$, with respect to the open subvariety $V\quot G$, are exactly the components of $\left(W\setminus V\right)\quot_L G$.
\end{lemma}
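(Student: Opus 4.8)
The plan is to reduce everything to the set-theoretic identity $(W\quot G)\setminus(V\quot G) = q(W\setminus V)$ and then compare codimensions on the two sides. Write $q\co W \to W\quot G$ for the GIT quotient map and let $D_1,\dots,D_k$ be the boundary divisors of $W$, so that $W\setminus V = \bigcup_i D_i$ as a union of irreducible closed $G$-invariant subvarieties of codimension one.

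First I would exploit the saturation hypothesis. Since each $D_i$ is saturated and a union of saturated sets is saturated, $W\setminus V$ is saturated; as $q$ is surjective, the complement $V$ is saturated as well. Consequently $q(V)$ is open in $W\quot G$, the restriction $q|_V\co V \to q(V)$ is a good quotient, so $q(V) = V\quot G$ is the affine open dense subvariety, and the images $q(V)$ and $q(W\setminus V)$ are disjoint. This gives $(W\quot G)\setminus(V\quot G) = q(W\setminus V)$. Because each $D_i$ is closed and $G$-invariant, $q(D_i)$ is closed and irreducible with $q|_{D_i}\co D_i \to q(D_i) = D_i\quot G$ a good quotient; hence $(W\setminus V)\quot G = q(W\setminus V) = \bigcup_i q(D_i)$, whose irreducible components are the maximal members of $\{q(D_i)\}_i$.

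For the implication ``boundary divisor $\Rightarrow$ component'', let $E$ be any boundary divisor of $W\quot G$, that is, an irreducible closed subvariety of codimension $1$ contained in $(W\quot G)\setminus(V\quot G)$. Then $E$ lies in some $q(D_i)$; since $q(D_i)$ is a proper closed subvariety of the irreducible $W\quot G$ it has dimension at most $\dim(W\quot G)-1 = \dim E$, forcing $E = q(D_i)$, which is thus a component. For the reverse implication I would invoke the standard fact that the complement of an affine open subvariety in a normal projective variety is of pure codimension one (the complement supports an effective ample divisor; see Goodman). Here $W\quot G$ is projective and normal (in the situation of interest $W=\overline{G}^{\,r}$ is smooth, and the GIT quotient of a normal variety is normal), while $V\quot G$ is affine, open and dense; hence $(W\quot G)\setminus(V\quot G)=\bigcup_i q(D_i)$ is pure of codimension one. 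Therefore every component of $(W\setminus V)\quot G$ is an irreducible codimension-one subvariety of the complement, i.e. a boundary divisor, completing the identification.

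The main obstacle is precisely this reverse implication: ruling out that some component $q(D_i)$ has codimension $\ge 2$, equivalently that the generic $q$-fibre over $q(D_i)$ jumps in dimension. An elementary dimension count does not settle this, and the pure-codimension-one theorem for complements of affine opens is exactly what is needed; the role of the saturation hypothesis is to guarantee that $V\quot G$ is an honest affine open whose complement is $\bigcup_i q(D_i)$, so that this theorem applies. A minor point to verify along the way is that $q$ restricts to a good quotient on the closed invariant sets $D_i$ and $W\setminus V$ (so that $q(D_i)=D_i\quot G$), which follows from the standard behaviour of good quotients on saturated closed invariant subvarieties.
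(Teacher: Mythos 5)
Your proposal is correct and follows essentially the same route as the paper's own proof: the saturation hypothesis makes $W\setminus V$ (hence $V$) saturated, so that $V\quot G$ sits as an affine open subvariety of $W\quot G$ (the paper cites the gluing construction in Dolgachev, Section 8.2, where you argue via saturated open sets and uniqueness of good quotients) and $(W\quot G)\setminus (V\quot G) = (W\setminus V)\quot G = \bigcup_i q(D_i)$, which is the entire content of the paper's two-sentence argument. The only difference is that you make explicit the codimension bookkeeping the paper leaves implicit -- in particular the pure-codimension-one statement for the complement of an affine open, which requires $W\quot G$ to be normal and projective; this is automatic in the intended application (there $W=\overline{G}^{\,r}$ is smooth and GIT quotients of normal varieties are normal) but is not literally among the lemma's stated hypotheses, so your fuller version quietly sharpens the role of normality rather than changing the method.
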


\begin{proof}
As the $G$-action extends to $W$, we see that $V$ is a $G$-stable affine open subset of $W$, and the boundary divisors in $W\setminus V$ are unions of $G$-orbits. The usual gluing construction for the GIT quotient (see \cite[Section 8.2]{Do}) shows that $V\quot G$ is an affine open subvariety in $W\quot_L G$. Since the boundary divisors in $W\setminus V$ are saturated, $W\setminus V$ is itself saturated, so we find that $(W\quot_L G) \setminus (V\quot G)$ is exactly $\left(\cup_i D_i\right)\quot_L G$ where the $D_i$'s are the boundary divisors  in $W\setminus V$.
\end{proof}

In \cite{BFLL} parabolic character varieties of free groups are defined and studied. We recall their definition. Let $G$ be a complex reductive group, and let $G_1,\ldots ,G_m$ be closed subgroups. Then $G$ acts on the product
$$G^n\times \prod_{1\leq j\leq m}G/G_j$$
by
$$g\cdot(h_1,\ldots, h_n, g_1G_1,\ldots,g_mG_m)\,=\,
(gh_1g^{-1},\ldots,gh_ng^{-1}, gg_1G_1,\ldots,gg_mG_m)\, .$$ The quotient
$(G^n \times\prod_{1\leq j\leq m}G/G_j)\quot G$ is the parabolic character variety of the
free group of rank $n$ with parabolic data $\{G/G_j\}_{j=1}^m$. We note that when the $G_i$'s
are reductive, as assumed in \cite{BFLL}, the homogeneous spaces $G/G_i$ are affine, and when
the $G_i$'s are parabolic, the homogeneous spaces $G/G_i$ are projective. In general, the homogeneous spaces $G/G_i$
are quasi-projective \cite[Theorem 6.8]{Borel}.

\begin{theorem}\label{div-thm}
The boundary divisors in $\overline{G}^{\,r}\quot_{\mathcal{L}} G$ are unions of parabolic character varieties of free groups.
\end{theorem}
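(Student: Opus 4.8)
The plan is to combine the structural description of the boundary divisors of $\overline{G}$ from Theorem~\ref{wonderfulgroup} with the GIT-quotient description of boundary divisors from Lemma~\ref{gitdivisor}, applied to $W\,=\,\overline{G}^{\,r}$ and $V\,=\,G^r\,=\,\hm(F_r,G)$. The starting point is the remark following Example~\ref{ex-charvar} that $\overline{G}^{\,r}\cong \overline{G^r}$, so that $\overline{G}^{\,r}$ is a smooth compactification of $G^r$ on which the diagonal conjugation $G$-action extends. I would first verify that the hypotheses of Lemma~\ref{gitdivisor} hold, namely that each boundary divisor of $\overline{G}^{\,r}$ is saturated with respect to the GIT quotient map $\overline{G}^{\,r}\to\overline{G}^{\,r}\quot G$. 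The complement $\overline{G}^{\,r}\setminus G^r$ is $\bigcup_{k=1}^r \overline{G}^{\,k-1}\times (\overline{G}\setminus G)\times \overline{G}^{\,r-k}$, whose irreducible components are unions of $G^r$-orbits (by Theorem~\ref{wonderfulgroup}(3), the divisors $D_i$ of $\overline{G}$ are $G\times G$-orbit closures). Saturation with respect to the diagonal conjugation action should follow because each component is a union of $G$-orbits whose closures meet the component; I would confirm this using that the boundary divisors are orbit closures under the full $G\times G$-action, which certainly contains the diagonal.

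Granting saturation, Lemma~\ref{gitdivisor} identifies the boundary divisors of $\overline{G}^{\,r}\quot G$, relative to the open affine $\fX_{F_r}(G)=G^r\quot G$, as exactly the components of $(\overline{G}^{\,r}\setminus G^r)\quot G$. The next step is to analyze these components orbit-type by orbit-type. By Theorem~\ref{wonderfulgroup}(1), the $G\times G$-orbits on $\overline{G}$ are indexed by subsets $I\subset\{1,\ldots,r\}$, and for each nontrivial orbit the stabilizer of a representative point $\fm_I$ is a subgroup $\mathrm{Stab}(\fm_I)$ whose Levi/parabolic structure is classical in the theory of wonderful compactifications. A generic point of a boundary component of $\overline{G}^{\,r}$ has the form $(x_1,\ldots,x_r)$ with some coordinate $x_k$ lying in a boundary orbit $(G\times G)\cdot\fm_I\cong (G\times G)/\mathrm{Stab}(\fm_I)$ and the remaining coordinates in $G$. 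The key computation is to rewrite the diagonal conjugation quotient of such a stratum as a mixed product quotient in the sense of \cite{BFLL}: the $G$-factors contribute the conjugation factors $G^n$, while each boundary orbit $(G\times G)/\mathrm{Stab}(\fm_I)$ contributes factors of the form $G/G_j$ for appropriate closed subgroups $G_j$ arising from the projections of $\mathrm{Stab}(\fm_I)$ to the two $G$-factors.

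Concretely, I would decompose $(G\times G)/\mathrm{Stab}(\fm_I)$ under the residual diagonal $G$-action and reorganize the full product so that, after stripping off the diagonal, each boundary orbit appears as a homogeneous space $G/G_j$ with $G_j$ a closed subgroup determined by $\mathrm{Stab}(\fm_I)$. This exhibits each stratum of $\overline{G}^{\,r}\setminus G^r$ as a mixed product $G^n\times\prod_j G/G_j$ with diagonal $G$-action, so its GIT quotient is by definition a parabolic character variety of a free group. A boundary divisor of $\overline{G}^{\,r}\quot G$, being a component of $(\overline{G}^{\,r}\setminus G^r)\quot G$, is then a union of such quotients, which is precisely the asserted union of parabolic character varieties.

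The main obstacle I anticipate is the identification of the subgroups $G_j$, i.e.\ correctly computing the stabilizers $\mathrm{Stab}(\fm_I)$ and their images under the two projections $G\times G\to G$, and then matching the residual diagonal action on $(G\times G)/\mathrm{Stab}(\fm_I)$ with a genuine mixed product $G/G_j$ in the framework of \cite{BFLL}. This requires care about which coordinates degenerate simultaneously and about the precise group-theoretic structure of the wonderful boundary orbits; the bookkeeping for multiple coordinates degenerating (when $|I|>1$ or when several $x_k$ lie in the boundary) is where the combinatorics becomes delicate. I expect the verification of saturation to be routine given Theorem~\ref{wonderfulgroup}, and the reduction to Lemma~\ref{gitdivisor} to be immediate, so the substance of the proof lies in this final identification.
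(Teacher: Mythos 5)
Your skeleton is exactly the paper's: apply Lemma~\ref{gitdivisor} with $W=\overline{G}^{\,r}$ and $V=G^r$, get saturation from the fact that the boundary divisors are Zariski closed and stable under the full $G\times G$-action (of which conjugation is a restriction), use Theorem~\ref{wonderfulgroup} to write each boundary divisor of $\overline{G}^{\,r}$ as a union of mixed products of $G\times G$-orbits, and finish by matching against the definition of a parabolic character variety. The paper's proof completes this last step in one line: under the isomorphism of an orbit with the corresponding homogeneous space, conjugation corresponds to the \emph{left diagonal} action, and the word ``unions'' in the statement of the theorem absorbs all remaining structure --- no computation of $\mathrm{Stab}(\mathfrak{m}_I)$ or its projections is ever needed.

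The one genuine flaw in your proposal is the mechanism you give for producing the parabolic data: you claim each boundary orbit $(G\times G)/\mathrm{Stab}(\mathfrak{m}_I)$ ``contributes factors of the form $G/G_j$ \ldots arising from the projections of $\mathrm{Stab}(\mathfrak{m}_I)$ to the two $G$-factors.'' This step would fail. There is a natural surjection $(G\times G)/H \to G/p_1(H)\times G/p_2(H)$, but it is an isomorphism only when $H = p_1(H)\times p_2(H)$, and for the intermediate boundary orbits of $\overline{G}$ the stabilizer is not a product subgroup: it sits inside $P_I\times P_I^-$ with the two Levi components coupled through their common Levi quotient, so the orbit is a nontrivial fiber bundle over $G/P_I\times G/P_I^-$ rather than a product of flag varieties (only the closed orbit is such a product, which is why the $\PGL_2$ case of Example~\ref{ex-group} is misleadingly simple). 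Relatedly, the diagonal $G$-action on a boundary orbit is not transitive, so the GIT quotient of a single stratum is \emph{not} ``by definition'' a single parabolic character variety, contrary to what you assert. The correct repair --- implicit in the paper and the reason the theorem says ``unions'' --- is to decompose each $G\times G$-homogeneous factor further into diagonal-$G$-stable pieces, each genuinely of the form $G/G_j$ with $G_j$ the diagonal stabilizer of a point (not a projection of $\mathrm{Stab}(\mathfrak{m}_I)$), accepting that these unions may be infinite; this is exactly the point of the remark following the theorem, which cites \cite{Ep} for the failure of finiteness. Once the statement is read at this level, the ``delicate bookkeeping'' you anticipate for simultaneous degenerations disappears entirely.
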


\begin{proof}
As noted above the boundary divisors in $\overline{G}$ are unions of homogeneous spaces of $G\cross G$, and by Theorem~\ref{wonderfulgroup} each boundary divisor is the closure of a single $G\cross G$--orbit. Therefore, $\overline{G}^{\,r}\setminus G^r$ consists of unions of products of $G\cross G$--homogeneous spaces. Since the conjugation action is a restriction of the $G\cross G$--action and by Lemma \ref{linelemma} there exists a $G\cross G$--equivariant section $s$ to $\mathcal{L}$ such that $G^r$ is the non-vanishing locus of $s$, the boundary divisors of $\overline{G}^r$ are saturated with respect to the GIT quotient map for the conjugation action.  The action of conjugation on an orbit corresponds, under the isomorphism between the orbit and the corresponding homogeneous space, to the left action on the homogeneous space. Thus, by Lemma \ref{gitdivisor} and the definition of parabolic character variety of free groups, the result follows.
\end{proof}

\begin{remark}
As shown in \cite{Ep}, the closure of an orbit in $\overline{G}$ under the conjugation action need not be a finite union of suborbits. Therefore, the boundary divisors in the previous theorem need not be finite unions of parabolic character varieties.
\end{remark}

\begin{example}\label{ex-div}
In Example \ref{ex-group} we see that the sole boundary divisor of the wonderful compactification of $\PSL_2(\C)$ is isomorphic to $\C P^1\times \C P^1$, a product of homogeneous spaces. Therefore, in Example \ref{ex-charvar}, given Theorem \ref{div-thm}, $\ol{\fX_{F_2}(\PSL_2(\C))}\setminus \fX_{F_2}(\PSL_2(\C))$ consists of GIT quotients of the diagonal left multiplication action of $\PSL_2(\C)$ on products of $\C P^1\times \C P^1$.  This is an example of a parabolic character variety as it is a left diagonal quotient of a product of homogeneous spaces.
\end{example}

It would be interesting to work out more examples (especially when $\Gamma$ is not free), or the above examples in more detail.  We leave this to future work.

\section{Poisson Structures}\label{poisson}

Recall that a Poisson algebra is a Lie algebra in which the Lie bracket  is also a derivation in each variable. We call a quasi-projective variety $X$ over $\C$ a Poisson variety if the 
sheaf of regular functions on $X$, denoted $\mathcal{O}(X)$, is equipped with the structure of a sheaf of Poisson algebras.
In this case, the sheaf of \e{holomorphic} functions on $X^{sm}$ (where $X^{sm}$ is the smooth locus of $X$) becomes a sheaf of Poisson algebras as well, making  $X^{sm}$ a complex Poisson manifold.  

The Poisson bracket on the algebra of holomorphic functions $\H (X^{sm})$ is induced by an exterior bivector field $\Lambda\,\in\, \Lambda^{2}(T^{1,0} X^{sm})$, see for instance \cite{Polishchuk}.
In other words, if $f,\,g\,\in\, \H(X^{sm})$, then the bracket is given by $\{f,g\}=\Lambda(df, dg).$ In local (complex) coordinates $(z_1,\ldots ,z_k)$ the bivector takes the form 
$$\Lambda=\sum_{i,j}\Lambda_{i,j}\frac{\partial}{\partial z_i}\land 
\frac{\partial}{\partial z_j}$$ and so 
\begin{eqnarray}\label{bivectorformula}
\{f,g\}&=\sum_{i,j}\left(\Lambda_{i,j}\frac{\partial}{\partial z_i}\land \frac{\partial}{\partial z_j}\right)\cdot\left(\frac{\partial f}{\partial z_i} dz_i \otimes \frac{\partial g}{\partial 
z_j}dz_j\right)\nonumber \\
&=\sum_{i,j}\Lambda_{i,j}\left(\frac{\partial f}{\partial z_i}\frac{\partial g}{\partial z_j}-
\frac{\partial f}{\partial z_j}\frac{\partial g}{\partial z_i}\right).
\end{eqnarray} 

In general, complex Poisson manifolds admit $(2,0)$--symplectic foliations \cite{LPV}.  For $f,\,g\,\in\, \H(X^{sm})$, the Hamiltonian vector field $H_f$ associated to $f$ is defined by $H_f(g)=\{f,g\}$.  Restricting the bivector $\Lambda$ to symplectic leaves gives the symplectic form $\omega(H_g,\,H_f)\,=\,\{f,\,g\}$.

For the rest of the section, $G$ will denote a semisimple algebraic group of adjoint type  over $\bbC$, with Lie algebra $\fg$. 
Let $\langle \langle\,,\, \rangle \rangle$ denote the Killing form on $\fg$. Following \cite{EL2}, we give the {\it double} $\fd := \fg \oplus \fg$ the symmetric, non-degenerate, and Ad-invariant bilinear form 
\begin{equation}\label{form} \la (x_1, x_2), (y_1, y_2) \rangle = \langle \langle x_1, y_1 \rangle \rangle  - \langle \langle x_2, y_2 \rangle \rangle.\end{equation}
A Lie subalgebra $\mathfrak{l}\subset\fd$ is said to be {\it Lagrangian} if $\mathfrak{l}$ is maximal isotropic with respect to the form (\ref{form}).
In other words, $\mathfrak{l}$ is Lagrangian if $\dim_\C \mathfrak{l} = \dim_\C\mathfrak{g}$ and $\langle x, y\rangle = 0$ for all $x, y \in \mathfrak{l}$.

A \e{Lagrangian splitting} of $\fd = \fg \oplus \fg$ is a vector space decomposition 
$\fd = \fl_1 + \fl_2$ 
in which both $\fl_1$ and $\fl_2$ are Lagrangian (note that it is not assumed that  $\fd$ is isomorphic to $\fl_1 \oplus \fl_2$ as Lie algebras). It will be helpful to observe that  the form (\ref{form}) yields an isomorphism $\fl_2 \srm{\isom} (\fl_1)^*$. It is clear that the diagonal Lie subalgebra 
$\mathfrak{g}_\Delta=\{(x,y)\in \fd \ |\ x=y\}$ is Lagrangian in $\fd$.  A {\it Belavin--Drinfeld splitting}, or just BD splitting, is a Lagrangian splitting $\fd = \mathfrak{l}_1 + \mathfrak{l}_2$
where $\mathfrak{l}_1=\mathfrak{g}_\Delta$.  In \cite[Example 4.4]{EL2} BD splittings are classified via \cite{BD}.  There is always at least one such splitting, namely the {\it standard Lagrangian splitting} $\mathfrak{l}_2\subset \mathfrak{b}\oplus\mathfrak{b}^{-}$ where $\mathfrak{b}$, $\mathfrak{b}^{-}$ are opposite Borel subalgebras of $\mathfrak{g}$ (see \cite{EL2} for details).

In \cite{EL1, EL2} it is shown that each Lagrangian splitting of $\fd$ endows $\overline{G}$ with a Poisson structure. Moreover, they show that each of these Poisson structures restricts to a Poisson structure on each $(G\times G)$--orbit, and 
hence to each boundary divisor in $\ol{G}$. We now review this construction.

For a complex manifold $M$, a bracket on the ring of holomorphic functions is a Poisson bracket if and only if the associated bivector $\Lambda$ 
satisfies $[\Lambda, \Lambda] = 0$, where $[\Lambda, \Lambda]\in \Lambda^3 (T^{1,0}M)$ is the Schouten bracket of $\Lambda$ with itself (see \cite[Theorem 1.8.5]{DuZu}). We will say that $\Lambda$ is a Poisson bivector when $[\Lambda, \Lambda] = 0$.
To simplify notation, given a holomorphic map $f\co M\to N$ of complex manifolds, we write $f_*$ to denote both the derivative $Df$ of $f$ and the maps on higher-order tensor fields induced by $Df$.

Let $\L_\fd\subset \mathrm{Gr}(n,\fd)$ be the space of Lagrangians in $\fd = \fg \oplus \fg$. Clearly $\L_\fd$ is a subvariety of the Grassmannian $\mathrm{Gr}(n,\fd)$.  Following the construction in \cite{EL1, EL2}, 
the Evens--Lu bivector $\Lambda$ on $\L_\fd$ is  defined by choosing a basis $\{x_i\}_i$ for $\mathfrak{l}_1$, and letting $\{y_i\}$ be the dual basis for $\mathfrak{l}_2 \isom \fl_1^*$ (that is, $\{y_i\}$ is the basis satisfying   $\la x_i, \xi_j\ra = \delta_{ij}$). Now define
$$r = \dfrac{1}{2} \sum_i x_i \wedge y_i \in \Lambda^2 (\fg \oplus \fg)$$
and
$$\Lambda_\fl = (\rho_\fl)_* (r) = \dfrac{1}{2} \sum_{i} (\rho_\fl)_* (x_i) \wedge (\rho_\fl)_* (y_i) \in \Lambda^2 (T_\fl \L_\fd),$$ where $\rho_\fl$ is defined below.
We note that $\Lambda_\fl$ is independent of the choice of basis $\{x_i\}_i$: for instance, using the form (\ref{form}), we may view $r = \frac{1}{2} \sum_i x_i \wedge y_i$ as an element of $(\Lambda^2 \fd)^*$, and evaluating $r$ on an element $(v_1, f_1) \wedge (v_2, f_2)\in \Lambda^2 (\fl_1 \oplus \fl_1^*) \isom \Lambda^2 (\fd)$ gives $f_1 (v_2) - f_2 (v_2)$, as can be checked on the basis for $\Lambda^2 (\fl_1 \oplus \fl_1^*)$ constructed from $\{x_i\}_i$.

As discussed in \cite[Examples 4.3 and 4.4]{EL2}, this bivector induces a Poisson structure on $\L_\fd$ and on each $G\cross G$ orbit in $\L_\fd$, as well as on the closure of each orbit. In particular, $\Lambda$ induces a Poisson structure on $\ol{G}$, which is the closure of the orbit $(G\cross G)\cdot \fg_\Delta$ of the diagonal $\fg_\Delta \in \L_\fd$.

Our next goal is to understand how the Evens--Lu Poisson structure interacts with the 
action of $G\cross G$ on $\ol{G}$, which is induced by the inclusion 
$$G\cross G \hookrightarrow \mathrm{Aut}(\fg) \cross  \mathrm{Aut}(\fg) \subset  \mathrm{Aut}(\fg \oplus \fg).$$ 
We recall some terminology regarding Poisson Lie groups and Poisson actions.
Let $M_1$ and $M_2$ be two Poisson varieties. A morphism $M_1\to M_2$ is called a {\it Poisson morphism} (or ichthyomorphism) if the dual morphism $\mathcal{O}(M_2) \to \mathcal{O}(M_1)$ is a morphism of Poisson sheaves.  A Poisson-algebraic group is an algebraic group $G$, equipped with a Poisson structure for which the group multiplication $G \times G \to G$ is a Poisson map. The action of a Poisson-algebraic group $G$ on a Poisson variety $M$ is a {\it Poisson action} when the action map $\alpha : G\times M \to M$ is a Poisson map, where $G\times  M$ has the product Poisson structure (defined by the sum of bivectors).

We introduce some notation that will be needed in the next lemma. Consider the (left) action of $G\cross G$ on  $\L_\fd$.  For each $\mathfrak{l}\in \L_\fd$, let
$$\rho_{\mathfrak{l}} \co G\cross G \to \L_\fd$$
be the map
$$\rho_{\mathfrak{l}} (g, h) = (g,h)\cdot \mathfrak{l}.$$
For each $(g,h)\in G\cross G$, let
$$\rho_{(g,h)} \co \L_\fd \maps \L_\fd$$
be the map
$$\rho_{(g, h)} = (g,h)\cdot \fl,$$
and let
$$\mu^R_{(g,h)} \co G\cross G \maps G\cross G \textrm{ and } \mu^L_{(g,h)} \co G\cross G \maps G\cross G$$
be the maps given by right- and left-multiplication by $(g,h)$ (respectively).

Define the BD--bivector on $G\cross G$ associated to the data $\{x_i\}_i$, $\{y_i\}_i$ by
\begin{equation}\label{Pi} \Pi_{(h,k)} = \dfrac{1}{2} \sum_i \left[ (\mu^R_{(h,k)})_* (x_i  \wedge y_i) -  (\mu^L_{(h,k)})_* (x_i \wedge y_i) \right].
\end{equation}

Similar to the discussion of the Evens--Lu bivector, the BD--bivector is independent of the choice of basis and so only depends on the BD splitting.

We will need the following standard fact.

\begin{lemma} 
The bivector $\Pi$ is Poisson, and induces a Poisson--Lie group structure on $G\cross G$ called the BD--Poisson structure.
\end{lemma}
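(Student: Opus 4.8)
The plan is to recognize $\Pi$ as the standard multiplicative bivector attached to the classical $r$-matrix of a Manin triple, and then to treat the two assertions of the lemma separately, since they rest on different parts of the theory. First I would record the purely algebraic input. Because $\fl_1$ and $\fl_2$ are Lagrangian subalgebras of $\fd$ for the nondegenerate, $\Ad$-invariant form $\la\,,\,\ra_\fd$, a dimension count forces $\fd = \fl_1\oplus\fl_2$ as vector spaces (their intersection is isotropic of dimension zero), so $(\fd,\fl_1,\fl_2)$ is a Manin triple. In this language $r = \frac12\sum_i x_i\wedge y_i \in\Lambda^2\fd$ is the skew-symmetric part of the canonical element $t = \sum_i x_i\otimes y_i$, while its symmetric part $\Omega = \frac12\sum_i(x_i\otimes y_i + y_i\otimes x_i)\in S^2\fd$ is the invariant element (a multiple of the Casimir) determined by $\la\,,\,\ra_\fd$; in particular $\Omega$ is $\Ad$-invariant because the form is.

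For the assertion that $\Pi$ induces a Poisson--Lie group structure, i.e.\ that the multiplication $(G\cross G)\cross(G\cross G)\to G\cross G$ is a Poisson map, I would note that this is automatic from the shape of $\Pi$. Any bivector $\Pi_a = (\mu^R_a)_* r - (\mu^L_a)_* r$ given as a right-translate minus a left-translate of a single $r\in\Lambda^2\fd$ is multiplicative: evaluating $\Pi$ at a product $ab$ and using that left and right translations commute, the mixed terms cancel and one is left with the cocycle identity $\Pi_{ab} = (\mu^L_a)_*\Pi_b + (\mu^R_b)_*\Pi_a$ that characterizes multiplicativity. This uses no property of $r$ beyond skew-symmetry, so the only substantive point is the first clause, that $\Pi$ is a genuine Poisson bivector.

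For that I would invoke Drinfeld's criterion (see \cite{EL1, EL2} and the references therein): for $r\in\Lambda^2\fd$ the multiplicative bivector $\Pi$ satisfies $[\Pi,\Pi]=0$ for the Schouten bracket if and only if the Yang--Baxter element $[r,r] := [r_{12},r_{13}] + [r_{12},r_{23}] + [r_{13},r_{23}]\in\Lambda^3\fd$ is $\Ad$-invariant. Here the Manin triple structure supplies exactly what is needed: since both $\fl_1$ and $\fl_2$ are Lie subalgebras, the canonical element $t$ satisfies the classical Yang--Baxter equation, and a short computation with $r = \frac12(t - t^{21})$ (where $t^{21}$ is the flip of $t$) identifies $[r,r]$ with a multiple of the totally antisymmetric invariant element built from $\Omega$. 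The $\Ad$-invariance of $[r,r]$ is then inherited from that of $\Omega$, and Drinfeld's criterion yields $[\Pi,\Pi]=0$.

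I expect the one real obstacle to be this last verification that $[r,r]$ is $\Ad$-invariant, equivalently that $r$ obeys the modified classical Yang--Baxter equation: this is the single step that genuinely uses that the splitting is Lagrangian with $\fl_1,\fl_2$ both subalgebras, rather than just a pair of complementary subspaces, whereas the multiplicativity and the reduction to Drinfeld's criterion are formal. As the computation is entirely standard for Manin triples, in the write-up I would carry it only far enough to identify $[r,r]$ with the invariant element coming from $\Omega$, and cite \cite{EL2} (and the standard Poisson--Lie references collected there) for the remainder.
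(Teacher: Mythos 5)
Your proposal is correct and takes essentially the same route as the paper: the paper's proof is simply a pair of citations recording that $\tfrac{1}{2}\sum_i x_i\otimes y_i$ is a quasitriangular $r$--matrix (\cite[Section 2]{Lu-Mouquin-mixed}, which is exactly your Manin-triple observation, the Lagrangian splitting making $(\fd,\fl_1,\fl_2)$ a Manin triple with $\Ad$-invariant symmetric part) and that any quasitriangular $r$--matrix induces a Poisson--Lie group structure via the coboundary formula \eqref{Pi} (\cite[pp.~46--47]{Kosmann-Schwarzbach}, \cite[Proposition 3.4.1]{KS}), which is precisely the multiplicativity-plus-Drinfeld-criterion argument you unpack. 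Your write-up just fills in the standard details the paper outsources to the literature (and your parenthetical reason for $\fl_1\cap\fl_2=0$ is slightly off --- it follows from $\fd=\fl_1+\fl_2$ and the dimension count, not from isotropy --- but this is harmless since the paper's definition of a splitting already supplies it).
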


This fact is discussed in various places in the literature. As discussed in~\cite[Section 2]{Lu-Mouquin}, the element $\dfrac{1}{2} \sum_i x_i \otimes y_i\in \fd\otimes \fd$ is a \e{quasitriangular} $r$--matrix, and
a quasitriangular $r$--matrix always induces a Poisson Lie group structure via the construction (\ref{Pi}), see \cite[pp. 46-47]{Kosmann-Schwarzbach}. Another discussion of this fact can be found in~\cite[Proposition 3.4.1]{KS}.

The next lemma is a version of Proposition 2.17 in~\cite{EL1}.

\begin{lemma}\label{545} For any BD--splitting of  $\fg \oplus \fg$, the action of $G\cross G$ on $\L_\fd$ is a Poisson action, where $G\cross G$ has the BD--Poisson structure and $\L_\fd$ has the Evens--Lu Poisson structure.
\end{lemma}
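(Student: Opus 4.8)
The plan is to show that the action map $\alpha \co (G\cross G)\cross \L \to \L$ is a Poisson morphism by working infinitesimally with the defining bivectors. Recall that the product Poisson structure on $(G\cross G)\cross \L$ has bivector $\Pi \oplus \Lambda$, where $\Pi$ is the BD--bivector of \eqref{Pi} and $\Lambda$ is the Evens--Lu bivector. The condition that $\alpha$ be Poisson is the pointwise identity $\alpha_*(\Pi_{(h,k)} \oplus \Lambda_\fl) = \Lambda_{(h,k)\cdot \fl}$ at every point $((h,k),\fl)$. Since $\alpha = \rho_\fl$ in its first slot and $\rho_{(h,k)}$ in its second, this reads
\begin{equation}\label{poissoncheck}
(\rho_\fl)_* \Pi_{(h,k)} + (\rho_{(h,k)})_* \Lambda_\fl \;=\; \Lambda_{(h,k)\cdot\fl}.
\end{equation}
First I would unwind the left-hand side using the explicit formulas. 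For the second summand, equivariance of the construction gives $(\rho_{(h,k)})_* \Lambda_\fl = (\rho_{(h,k)})_* (\rho_\fl)_*(r) = (\rho_{(h,k)\cdot\fl})_*\big((\Ad_{(h,k)})_* r\big)$, so one must compare $(\Ad_{(h,k)})_* r$ with $r$ itself. For the first summand, I would substitute the two terms of $\Pi_{(h,k)}$ from \eqref{Pi}; the right-multiplication term, pushed forward by $\rho_\fl$, contributes $(\rho_{(h,k)\cdot\fl})_* r$ via the chain rule $\rho_\fl \circ \mu^R_{(h,k)} = \rho_{(h,k)\cdot\fl}$ restricted appropriately, while the left-multiplication term produces the correction that exactly cancels the discrepancy coming from $(\Ad_{(h,k)})_* r - r$.

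The key computational step is therefore the chain-rule bookkeeping: relating the four maps $\rho_\fl$, $\rho_{(h,k)}$, $\mu^R_{(h,k)}$, $\mu^L_{(h,k)}$ through the identities $\rho_\fl \circ \mu^R_{(h,k)} = \rho_{(h,k)\cdot \fl}$ and $\rho_{(h,k)}\circ \rho_\fl = \rho_{(h,k)\cdot\fl}\circ \Ad_{(h,k)}$ (as maps $G\cross G \to \L$), and feeding these into the pushforwards of $r$. Once these are in place, \eqref{poissoncheck} reduces to the purely Lie-theoretic identity that $r + (\text{correction from } \mu^L)$ matches $(\Ad_{(h,k)})_* r$, which holds because $r$ is built from dual bases of the BD--splitting $\fd = \fg_\Delta + \fl_2$ and the whole point of the BD--condition $\fl_1 = \fg_\Delta$ is that the relevant $\Ad$-action on $r$ decomposes compatibly with this splitting. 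Here I would invoke the cited Proposition 2.17 of \cite{EL1}, of which this lemma is a direct translation: that proposition establishes precisely this compatibility for the quasitriangular $r$--matrix $\tfrac12\sum_i x_i\otimes y_i$, so our statement follows by restricting their argument to the orbit $\L$ and its tangent maps.

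The main obstacle I expect is keeping the two roles of the map $\rho$ straight while correctly tracking how the left- versus right-invariant extensions of $r$ interact with the equivariance of $\Lambda_\fl = (\rho_\fl)_* r$. Concretely, the subtlety is that $\Lambda$ is defined as a \emph{left}-equivariant pushforward of the \emph{fixed} tensor $r$, whereas $\Pi$ mixes left and right multiplication; the cancellation in \eqref{poissoncheck} is exactly the statement that this asymmetry is engineered so that the action becomes Poisson rather than merely equivariant. Since the identity is bilinear in the wedge factors and $r$ is a finite sum $\tfrac12\sum_i x_i\wedge y_i$, it suffices to verify the cancellation on each summand, and I would reduce to checking it at the identity of $G\cross G$ and then transport by multiplication, at which point it becomes the infinitesimal $r$--matrix identity already recorded in \cite{EL1, Lu-Mouquin-mixed}.
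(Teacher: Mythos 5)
Your proposal is correct and follows essentially the same route as the paper's proof: verify the pointwise bivector identity $\Lambda_{(g,h)\cdot\fl} = (\rho_{(g,h)})_*(\Lambda_\fl) + (\rho_\fl)_*(\Pi_{(g,h)})$ via the composition identities $\rho_{(g,h)\cdot\fl} = \rho_\fl\circ\mu^R_{(g,h)}$ and $\rho_{(g,h)}\circ\rho_\fl = \rho_\fl\circ\mu^L_{(g,h)}$, of which your $\Ad_{(h,k)}$-form is an equivalent restatement. One clarification: the closing appeal to a residual ``Lie-theoretic $r$--matrix identity,'' to the BD condition $\fl_1=\fg_\Delta$, and to Proposition 2.17 of \cite{EL1} is unnecessary --- once the two composition identities are differentiated at the identity, the left-multiplication term of $\Pi_{(g,h)}$ cancels $(\rho_{(g,h)})_*(\Lambda_\fl)$ exactly (both equal $(\rho_\fl)_*\bigl((\mu^L_{(g,h)})_* r\bigr)$) and the right-multiplication term is precisely $\Lambda_{(g,h)\cdot\fl}$, so the cancellation is purely formal and holds for any Lagrangian splitting, the BD hypothesis playing no role in this compatibility check.
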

\begin{proof} When written in terms of bivectors, the condition for the action to be Poisson becomes
\begin{equation}\label{DZ-act}
\Lambda_{(g,h) \cdot \fl} = (\rho_{(g,h)})_* (\Lambda_\fl) + (\rho_\fl)_* (\Pi_{(g,h)})
\end{equation}
(see, for instance, \cite[5.4.5]{DuZu}).

 To prove (\ref{DZ-act}),
 observe that
$$\rho_{(g,h)\cdot \fl} = \rho_\fl \circ \mu^R_{(g,h)}$$
 and
 $$\rho_{(g,h)} \circ \rho_\fl = \rho_\fl \circ \mu^L_{(g,h)}.$$
  We now have
   $$\Lambda_{(g,h) \cdot \fl} - (\rho_{(g,h)})_* (\Lambda_\fl) = (\rho_{(g,h) \cdot \fl})_* (r) - (\rho_{(g,h)})_* \left( (\rho_\fl)_* (r) \right) $$
$$   = (\rho_\fl)_* ( (\mu^R_{(g,h)})_* (r)) - (\rho_\fl)_* ( (\mu^L_{(g,h)})_* (r))   
   =(\rho_\fl)_* (\Pi_{(g,h)}).$$
\end{proof}

We now turn to the conjugation action of $G$ on $\ol{G}$, which extends the conjugation action of $G$ on itself. Recall that $\ol{G}$ is the closure of $(G\cross G)\cdot \fg_\Delta$ inside $\L_\fd$. The map
$$G\maps  (G\cross G)\cdot \fg_\Delta$$
defined by $g\mapsto (g,e)\cdot  \fg_\Delta$ is a diffeomorphism.
If we give $G$ the $G\cross G$ action
$$(h,k)\cdot g = hgk^{-1},$$ 
then
this diffeomorphism is $G\cross G$--equivariant. In particular, the action of the subgroup $G_\Delta \subset G\cross G$ on $\ol{G}$ corresponds, under this diffeomorphism, to the (left) conjugation action of $G$ on itself; $h\cdot g = hgh^{-1}$.

We wish to study the conjugation action of $G$ on $\ol{G}^{\,n}$ and its interaction with the Evens--Lu Poisson structure. However, a subtlety arises: if we equip $\ol{G}^{\,n}$  and  $(G\cross G)^n$  with the direct product Poisson structures arising from a BD--splitting of $\fd$, then the action of $(G\cross G)^n$ on $\ol{G}^{\,n}$ is Poisson, but this does not imply that the action of the 
diagonal subgroup $\{(g,g,\ldots, g)\} \subset (G\cross G)^n$ is Poisson, as this diagonal subgroup need not be a Poisson Lie subgroup. 

Recent work of Lu and Mouquin~\cite{Lu-Mouquin} provides a way to avoid this problem by using the \e{mixed product} Poisson structure on $\ol{G}^{\,n}$.
We briefly explain the setup, specialized to our situation. Details may be found in~\cite[Section 6]{Lu-Mouquin}.
 Let $G$ be as above, and equip $D=G\cross G$ with the above Poisson structure. Given a Poisson $D$--space $(Z, \pi_Z)$, let $\lambda \co \fd\to \mathcal{V}^1 (Z)$ be the map induced by the action, sending $x\in \fd$ to the vector field $\dfrac{d}{dt}|_{t=0} \exp(tx)y$; see~\cite[Section 1.4]{Lu-Mouquin}. 

Lu and Mouquin define the mixed product Poisson bivector on $Z^n$ by the formula
$$\pi_{Z^n} = (\pi_Z, \ldots, \pi_Z) + \sum_{1\leq j < k \leq n} \sum_i  (i_j)_*\lambda (y_i) \wedge (i_k)_* (\lambda (x_i))$$
where $r = \sum_i x_i\otimes y_i$ is the $r$--matrix defining the Poisson structure on $D$ and $i_l\co Z\to Z^n$ is the inclusion into the $l$th factor of the product. By~\cite[Theorem 6.8 and Lemma 2.13]{Lu-Mouquin}, the diagonal action of $D$ on $(Z^n, \pi_{Z^n})$ is a Poisson action. In particular, letting $Z = \ol{G}$ with $\pi_Z = \Lambda$ (the Evens--Lu Poisson bracket), we find that the diagonal action of $D$ on $(\ol{G}^{\,n}, \Lambda_n)$ is Poisson, where $\Lambda_n$ is the mixed product Poisson structure on $\ol{G}^{\,n}$ associated to $\Lambda$. The diagonal subgroup $G_\Delta \subset D$ (corresponding to the Lagrangian subalgebra $\fg_\Delta \subset \fd = \fg \oplus \fg$) is a Poisson Lie subgroup of $D$, as explained (for instance) in~\cite[Appendix]{Evens-Lu-GR}. Returning to the general setting above, this implies that the diagonal action of $D$ on $Z^n$ restricts to a Poisson action of $G = G_\Delta$ on 
$(Z^n, \pi_{Z^n})$; note that this is precisely the action of $G$ given by the diagonal embedding of $G$ into $D^n = G^{2n}$. In our case, these facts lead to the following result.

\begin{proposition}\label{adinvar} Let $G$ be a semisimple group of adjoint type, and fix a BD-splitting of $\fg\oplus \fg$, with associated quasitriangular $r$--matrix $r\in \Lambda^2 (\fg\oplus \fg)$. 
Equip $G$ with the Poisson structure induced by $r$, and equip $\ol{G}^{\,n}$ with the mixed product Poisson structure associated to the Evens--Lu Poisson structure on $\ol{G}$.
Then the diagonal action of $G$ on $\ol{G}^{\,n}$ is Poisson, and restricts to the diagonal conjugation action of $G$ on $G^n \subset \ol{G}^{\,n}$.
\end{proposition}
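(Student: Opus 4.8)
The plan is to obtain the result by feeding the Evens--Lu Poisson $D$--space $(\ol{G}, \Lambda)$, where $D := G\cross G$, into the Lu--Mouquin mixed product machinery and then restricting along the Poisson Lie subgroup $G_\Delta \leq D$. First I would verify that $(\ol{G}, \Lambda)$ is genuinely a Poisson $D$--space. By Lemma~\ref{545}, the action of $D$ on $\L$ is Poisson when $D$ carries the BD--Poisson structure $\Pi$ of~\eqref{Pi} and $\L$ carries the Evens--Lu structure. Since $\ol{G}$ is the closure of the single orbit $(G\cross G)\cdot \fg_\Delta$, it is a $D$--invariant closed subvariety of $\L$ to which the Evens--Lu bivector $\Lambda$ restricts; thus $(\ol{G}, \Lambda)$ is a Poisson $D$--space, which is precisely the input required in~\cite[Section 6]{Lu-Mouquin}.

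Next I would apply the mixed product construction with $Z = \ol{G}$ and $\pi_Z = \Lambda$. By~\cite[Theorem 6.8 and Lemma 2.13]{Lu-Mouquin}, the diagonal action of $D$ on $(\ol{G}^{\,n}, \Lambda_n)$ is a Poisson action, where $\Lambda_n$ is the associated mixed product bivector. The crucial step is then to restrict this Poisson $D$--action along the inclusion $G_\Delta \hookrightarrow D$. Because $G_\Delta$ is a Poisson Lie subgroup of $D$ for our $r$--matrix (see~\cite[Appendix]{Evens-Lu-GR}), this inclusion is a morphism of Poisson Lie groups, and the restriction of a Poisson action to a Poisson Lie subgroup is again Poisson; the Poisson structure that $\Pi$ induces on $G_\Delta \isom G$ is precisely the one determined by $r$, matching the statement. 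Finally, under the $D$--equivariant diffeomorphism $G \isom (G\cross G)\cdot \fg_\Delta$ recalled above, the subgroup $G_\Delta$ acts on each factor by conjugation, so the diagonal $G_\Delta$--action on $\ol{G}^{\,n}$ is the diagonal conjugation action, which on $G^n \subset \ol{G}^{\,n}$ is $g\cdot (h_1,\ldots,h_n) = (gh_1g^{-1},\ldots, gh_ng^{-1})$.

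The main obstacle is the one already flagged before the statement: one cannot simply equip $\ol{G}^{\,n}$ and $D^n$ with product Poisson structures and restrict the resulting Poisson $D^n$--action to the diagonal, since the diagonal of $D^n$ need not be a Poisson Lie subgroup. The mixed product structure $\Lambda_n$ is designed precisely to make a \emph{single} copy of $D$ act diagonally in a Poisson fashion, with the correction terms $\sum_i (i_j)_*\lambda(y_i)\wedge (i_k)_*(\lambda(x_i))$ encoding exactly this failure of multiplicativity. Consequently the genuine work is bookkeeping rather than new estimates: one must reconcile the normalization conventions (the factor of $\tfrac{1}{2}$ and the passage between $r = \sum_i x_i\otimes y_i$ and the bivector $\tfrac{1}{2}\sum_i x_i\wedge y_i$), confirm that $\Lambda$ on $\ol{G}$ is literally the restriction of the Evens--Lu bivector on $\L$, and check that $G_\Delta$ is a Poisson Lie subgroup for the specific BD $r$--matrix at hand. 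None of these requires ideas beyond the cited results.
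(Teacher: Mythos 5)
Your proposal is correct and takes essentially the same route as the paper, whose argument for this proposition is exactly the discussion preceding its statement: realize $(\ol{G},\Lambda)$ as a Poisson $D$--space via the Evens--Lu structure on the orbit closure in $\L$ (Lemma~\ref{545}), invoke \cite[Theorem 6.8 and Lemma 2.13]{Lu-Mouquin} for the diagonal $D$--action on the mixed product $(\ol{G}^{\,n},\Lambda_n)$, and restrict along the Poisson Lie subgroup $G_\Delta \leq D$ using~\cite[Appendix]{Evens-Lu-GR}, identifying the restricted action with diagonal conjugation via the $D$--equivariant isomorphism $G \isom (G\cross G)\cdot\fg_\Delta$. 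The only difference is cosmetic: you make the bookkeeping (orbit-closure restriction, normalization of the $r$--matrix, equivariance check) slightly more explicit than the paper does.
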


It is well-known that if $X$ is a Poisson manifold and a Lie group $G$ acts on $X$ through Poisson maps, then the $G$--invariant functions on $X$ form a Poisson algebra (see for instance~\cite[p. 24]{DuZu}). The following proposition is a version of this statement. 

\begin{proposition}\label{lemmapoisson2}
Let $X$ be a quasi-projective Poisson variety and let $G$ be a reductive algebraic group that is a Poisson Lie group. If $G$ acts on $X$ and the action map $X\cross G \to X$ is Poisson, then with respect to any $G$-linearized ample line bundle $L$, the GIT quotient $X\quot_L G$ is a Poisson variety and the quotient map $X\to X\quot_L G$ is a Poisson map.
\end{proposition}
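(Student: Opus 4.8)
The plan is to reduce the statement to the affine-local setting by covering $X\quot G$ with the standard affine charts of GIT and then gluing. Recall that $X$ is quasi-projective and $G$ is reductive, so $X\quot G$ is constructed by choosing a $G$-linearized ample line bundle and taking $\mathrm{Proj}$ of the invariant section ring; locally over $X\quot G$ this quotient is modeled on $\mathrm{Spec}\, \H(U)^G$ for $G$-stable affine opens $U\subset X$ that are saturated with respect to the quotient map. Since the formation of the Poisson bracket is local and since a morphism is Poisson if and only if it is Poisson on an open cover, it suffices to treat the affine case $X = \mathrm{Spec}\,\H(X)$ and show that $\H(X)^G$ is a Poisson subalgebra of $\H(X)$, with the inclusion $\H(X)^G \hookrightarrow \H(X)$ realizing $X \to X\quot G$ as a Poisson map.

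The heart of the matter is therefore the following purely algebraic claim in the affine setting: if $G$ is a reductive Poisson-algebraic group acting on an affine Poisson variety $X$ by a Poisson action, then $\{\,\cdot\,,\,\cdot\,\}$ restricts to a bracket on $\H(X)^G$. First I would record what the Poisson-action hypothesis says on coordinate rings. Writing $\alpha\co G\times X\to X$ for the action and $\Delta_G$, $\Delta_X$ for the respective bivectors, the condition that $\alpha$ is Poisson means the comultiplication $\alpha^*\co \H(X)\to \H(G)\otimes \H(X)$ intertwines the brackets, where $\H(G)\otimes \H(X)$ carries the product Poisson structure built from the Poisson structure on $G$ and on $X$. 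The key point I would exploit is that $f\in \H(X)^G$ is exactly the condition $\alpha^* f = 1\otimes f$, i.e.\ $f$ is fixed under the coaction. The plan is then to compute $\alpha^*\{f,g\}$ for two invariants $f,g$ and show it again lies in $1\otimes \H(X)$, which forces $\{f,g\}$ to be invariant. Using the Poisson-map identity $\alpha^*\{f,g\} = \{\alpha^* f, \alpha^* g\}$ and substituting $\alpha^* f = 1\otimes f$, $\alpha^* g = 1\otimes g$, the product Poisson bracket on $\H(G)\otimes \H(X)$ of two elements of the form $1\otimes f$ and $1\otimes g$ is $1\otimes \{f,g\}$ precisely because the $\H(G)$-factors are constants and so their contribution through the Poisson tensor on $G$ vanishes. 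This yields $\alpha^*\{f,g\} = 1\otimes \{f,g\}$, i.e.\ $\{f,g\}\in \H(X)^G$, as desired.

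With the affine case in hand, I would return to the general quasi-projective quotient and assemble the local statements. The bracket on $\H(U)^G$ for each saturated $G$-stable affine open $U$ agrees on overlaps because it is simply the restriction of the bracket on $X$, so the local Poisson structures glue to a sheaf of Poisson algebras on $X\quot G$, making $X\quot G$ a Poisson variety; and by construction the pullback $\H(X\quot G)\to \H(X)$ on each chart is the inclusion $\H(U)^G\hookrightarrow \H(U)$, which is a Poisson-algebra homomorphism, so the quotient map is Poisson. I expect the main obstacle to be bookkeeping in the quasi-projective GIT construction rather than in the algebra: one must verify that the standard affine charts of $X\quot G$ really are of the form $\mathrm{Spec}\,\H(U)^G$ for saturated $G$-stable affine opens $U$ and that reductivity is used exactly where needed (to guarantee that invariants separate orbits and that the charts patch), so that the local Poisson brackets are compatible. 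A secondary subtlety worth checking carefully is the identification of the product Poisson structure on $\H(G)\otimes \H(X)$ and the verification that the $G$-factor drops out when evaluated on coinvariant elements; this is where the Poisson-Lie hypothesis on $G$ enters, and it is precisely the mechanism that makes invariants close under the bracket. Since this proposition is exactly the fact the appendix of Caine and Evens is written to establish, I would defer the full scheme-theoretic details to the appendix and present here only the reduction to the affine case together with the coaction computation above.
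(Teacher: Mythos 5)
Your proof is correct and follows essentially the same route as the paper's own argument (carried out in the appendix by Caine and Evens): the affine heart is exactly their Lemma~\ref{lem_invariantpalg} --- invariance expressed as the coaction identity $a^*f = 1\otimes f$, combined with the product-bracket formula so that the $\H(G)$-factor drops out on constants --- and the quasi-projective case is handled by the same gluing over the charts $Y_f \cong X_f \quot G$ for homogeneous invariants $f$ on the affine cone, with the quotient map Poisson chart by chart. One small correction to your closing remark: what makes invariants close under the bracket is merely that the $G$-bracket vanishes on the constant function $1$, not the Poisson--Lie multiplicativity itself, which enters earlier in specifying that the action is Poisson for the product structure on $G \cross X$.
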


\begin{proof} 
This is a consequence of Property $(1)$ in \cite[Lemma 5.4.5]{DuZu} which characterizes Poisson actions in terms of bivectors.  The explicit statement, in the affine case, is given in \cite[Proposition 5.33]{LPV}.  In the quasi-projective case,  $X\quot_L G$ is built from open affine subvarieties (see \cite[Section 8.2]{Do}), so one can apply the affine case locally. A detailed discussion is provided in the Appendix.
\end{proof}

\begin{theorem}$\label{poisson-thm}$
There exists a Poisson structure on the wonderful compactification of a free group character variety over $\C$, and also on its boundary divisors.
\end{theorem}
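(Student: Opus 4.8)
The plan is to assemble Theorem~\ref{poisson-thm} directly from the machinery already built up in this section, treating the free group and boundary divisor claims in turn. First I would handle the wonderful compactification of the free group character variety $\ol{\fX_{F_r}(G)} \cong \ol{G}^{\,r}\quot G$. By Proposition~\ref{adinvar}, after fixing a BD--splitting of $\fd = \fg\oplus\fg$ with associated $r$--matrix, the group $G$ carries a Poisson Lie group structure and $\ol{G}^{\,r}$ carries the mixed product Poisson structure for which the diagonal action of $G$ is Poisson. Since $G$ is semisimple of adjoint type, it is reductive, and $\ol{G}^{\,r}$ is projective (hence quasi-projective). Thus the hypotheses of Proposition~\ref{lemmapoisson2} are met, and that proposition immediately produces a Poisson structure on the GIT quotient $\ol{G}^{\,r}\quot G = \ol{\fX_{F_r}(G)}$, with the quotient map being Poisson. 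This gives the first assertion.

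Next I would address the boundary divisors. Recall from Theorem~\ref{div-thm} that the boundary divisors of $\ol{G}^{\,r}\quot G$ are (unions of components of) $\left(\ol{G}^{\,r}\setminus G^r\right)\quot G$, and that the relevant boundary loci in $\ol{G}^{\,r}$ are saturated and $G$--stable. The key input is that the Evens--Lu construction (as reviewed after equation~\eqref{Pi}) furnishes a Poisson structure not merely on $\ol{G}$ but on the closure of each $G\cross G$--orbit in $\L$, and in particular restricts to each boundary divisor of $\ol{G}$. Consequently the mixed product construction of Lu--Mouquin applies verbatim with $Z$ taken to be such a boundary divisor (or an appropriate $G\cross G$--stable closed subvariety of $\ol{G}^{\,r}$), yielding a Poisson structure on the corresponding saturated closed subvariety of $\ol{G}^{\,r}$ for which the diagonal $G$--action is again Poisson. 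Applying Proposition~\ref{lemmapoisson2} once more to these $G$--stable quasi-projective Poisson subvarieties produces Poisson structures on their GIT quotients, which by Theorem~\ref{div-thm} are exactly the boundary divisors of $\ol{\fX_{F_r}(G)}$.

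The main obstacle I anticipate is not the GIT descent step, which is now black-boxed by Proposition~\ref{lemmapoisson2} and the Appendix, but rather verifying that the mixed product Poisson structure genuinely restricts to the boundary strata in a way compatible with the diagonal $G$--action. Concretely, one must check that each boundary divisor of $\ol{G}^{\,r}$, viewed inside $\L^{r}\subset \mathrm{Gr}(n,\fd)^{r}$, is a Poisson subvariety for $\Lambda_r$ (the mixed product structure), so that restriction is well-defined. This follows because the Evens--Lu bivector $\Lambda$ is tangent to each $G\cross G$--orbit closure in $\L$, and the correction terms in the mixed product formula $\pi_{Z^r}$ are built from the action vector fields $\lambda(x_i),\lambda(y_i)$, which are likewise tangent to the $G\cross G$--stable boundary strata; hence $\Lambda_r$ restricts to the product boundary loci. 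Once this tangency is recorded, the boundary case reduces to the free group case applied to the restricted data, and the theorem follows.
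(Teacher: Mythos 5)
Your proposal is correct and follows essentially the same route as the paper's proof: the compactification case is exactly Proposition~\ref{adinvar} combined with Proposition~\ref{lemmapoisson2}, and the boundary case repeats that argument using Theorem~\ref{div-thm} together with the fact that the Evens--Lu structure restricts to each $G\times G$--orbit closure in $\ol{G}$. Your explicit check that the mixed-product bivector $\Lambda_r$ is tangent to the $G\times G$--stable boundary strata (both the Evens--Lu terms and the action-vector-field correction terms) makes precise a step the paper compresses into ``the same argument as above,'' but it does not constitute a different approach.
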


\begin{proof}

The Poisson structure on $\overline{\fX_{F_r}(G)}$ follows directly from Proposition~\ref{adinvar} and Proposition~\ref{lemmapoisson2}.

Since each boundary divisor of $\overline{G}^{\,r}$ is a union of products of orbits where each admits a Poisson structure (restricted from that on $\overline{G}$), the same argument as above shows that the Poisson structures on the boundary divisors of $\overline{G}^{\,r}$ descend to the boundary divisors of $\overline{\fX_{F_r}(G)}$.
\end{proof}

Since the boundary divisors of $\overline{\fX_{F_r}(G)}$ are unions of parabolic free group character varieties, we immediately conclude:

\begin{corollary}\label{poissoncor}
There exists a Poisson structure on those parabolic character varieties of free groups that lie inside the boundary divisors of $\overline{\fX_{F_r}(G)}$.
\end{corollary}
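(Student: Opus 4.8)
The plan is to derive Corollary~\ref{poissoncor} as an immediate consequence of Theorem~\ref{poisson-thm} together with the divisor identification in Theorem~\ref{div-thm}. The key observation is that the two structural facts needed have already been established independently: on one hand, Theorem~\ref{div-thm} tells us that every boundary divisor of $\overline{\fX_{F_r}(G)} \cong \overline{G}^{\,r}\quot G$ is a union of parabolic character varieties of free groups; on the other hand, the proof of Theorem~\ref{poisson-thm} produces a Poisson structure not only on $\overline{\fX_{F_r}(G)}$ itself but also on each of its boundary divisors. Combining these two statements forces the conclusion, since a Poisson structure on a union of subvarieties restricts to a Poisson structure on each irreducible component.

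Concretely, first I would recall that in the proof of Theorem~\ref{poisson-thm} the boundary divisors of $\overline{G}^{\,r}$ are unions of products of $(G\times G)$--orbits, each of which carries the Evens--Lu Poisson structure restricted from $\overline{G}$, and that the mixed product construction of Lu--Mouquin makes the diagonal conjugation action of $G$ Poisson on each such divisor. Applying Proposition~\ref{lemmapoisson2} to the GIT quotient by this $G$--action then equips the image divisor in $\overline{\fX_{F_r}(G)}$ with a Poisson structure. Next I would invoke the identification from Theorem~\ref{div-thm}: under the isomorphism carrying the conjugation action on an orbit to the left action on the corresponding homogeneous space, these image divisors are precisely unions of parabolic character varieties $(G^n\times \prod_j G/G_j)\quot G$. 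Hence the parabolic character varieties sitting inside the boundary inherit the Poisson structure.

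The one point deserving care is that a parabolic character variety appearing in Corollary~\ref{poissoncor} is only asserted to be Poisson when it lies \emph{inside} a boundary divisor of $\overline{\fX_{F_r}(G)}$; as the remark after Theorem~\ref{div-thm} notes, the divisors can be infinite unions of parabolic character varieties, so one should extract the Poisson structure on an individual component by restricting the bracket from the ambient divisor. Since a Poisson bracket on a variety restricts to any irreducible component on which the bracket closes—here guaranteed because each such component is itself a GIT quotient of a $G$--stable piece carrying the restricted structure—this restriction is well defined. I expect this compatibility of the restriction with the component decomposition to be the only genuine (though minor) obstacle; everything else is a direct citation of the preceding results. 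This is exactly why the statement is phrased as an immediate corollary: the substantive work has already been carried out in Theorem~\ref{div-thm} and Theorem~\ref{poisson-thm}.
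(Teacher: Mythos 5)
Your proposal is correct and matches the paper's own argument: the corollary is deduced immediately by combining Theorem~\ref{div-thm} (boundary divisors are unions of parabolic character varieties of free groups) with Theorem~\ref{poisson-thm} (the wonderful Poisson structures exist on those divisors). Your added remark about restricting to individual components is a reasonable elaboration rather than a different route, and it is already implicit in the paper since each $(G\times G)$--orbit carries the restricted Evens--Lu structure and each parabolic piece arises as a GIT quotient of such orbits.
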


We call the Poisson structures shown to exist above the {\it wonderful Poisson structures.}

In \cite{G7,G8} Goldman showed there is a Poisson structure on $\hm(\pi_1(\Sigma_{g,n}),G)\quot G$ where $\Sigma_{g,n}$ is an orientable surface of genus $g$ with $n$ disjoint boundary components (see also \cite{La2}). Moreover, the Casimirs (those functions that Poisson commute) are exactly the invariant functions restricted to the boundary components. 

\begin{question}
How does Goldman's Poisson structure on $G^r\quot G$ relate to the wonderful Poisson structures on $G^r\quot_{\mathcal{L}} G$, and $\overline{G}^{\,r}\quot G$? 
\end{question}

\begin{remark}\label{Spec-rmk}
Given an affine Poisson variety $V$, the Poisson bracket $\{\ ,\ \}_V$ is determined by its action on the coordinate ring $\C[V]$ by the Stone-Weierstrass theorem. Suppose $V$ has Casimirs $\{c_1,
\ldots ,c_m\}$. Then the algebra $A:=\C[V]/(c_1-\lambda_1,\ldots,c_k-\lambda_k)$, where
$\lambda_1,\ldots,\lambda_k\in \C$, is a Poisson algebra with bracket defined by $\{f+I ,g+I \}=\{f,g\}_V+I$ where $I$ is the ideal $(c_1-\lambda_1,\ldots,c_k-\lambda_k)$. Therefore, the variety $\mathrm{Spec}(A)$ is an affine Poisson variety.
\end{remark}

Now applying Remark \ref{Spec-rmk} to the setting of parabolic character varieties of free groups we see that whenever the parabolic data $\{G/H_i\}$ are isomorphic to $G$-conjugation orbits $($equivalently $H_i$'s are isomorphic to conjugation stabilizers$)$, then the Goldman Poisson bracket on $G^r\quot G$ with some set of its Casimirs fixed $($fixing some set of the boundaries up to conjugation is equivalent to fixing some set of the Casimirs$)$ determines a Poisson structure on the parabolic character variety of a free group resulting from fixing some $($but not all$)$ the boundaries to conjugation orbits.
Therefore, we have a Goldman-type Poisson structure on certain parabolic character varieties of free groups. 

\begin{question} How does this Goldman-type Poisson structure compare to the wonderful Poisson structures from Corollary \ref{poissoncor}?
\end{question}

\appendix

\section{Poisson Structures on GIT Quotients \texorpdfstring{\\}{} by Arlo Caine and Sam Evens}
\label{sec_intro}

We explain how to put a Poisson structure on a quotient of a linearized irreducible Poisson algebraic variety by the action of a reductive Poisson algebraic group $G$.  First we discuss the affine setting, and then we apply the affine
case  to
the general situation.

\subsection{Quotient of an affine variety}
\label{sec_construction}

We explain how to put a Poisson structure on the quotient of an affine variety. These results are in \cite{Yang} and in \cite{LPV}.

As above, let $(G, \pi_G)$ be a reductive 
Poisson linear algebraic group. Denote the Poisson Lie algebra structure on the coordinate ring $\Bbbk[G]$ by $\{ \phi_1, \phi_2 \}_G$ for $\phi_1, \phi_2 \in \Bbbk[G]$. 

Let $(X, \{ , \}_X )$ be a Poisson algebraic variety, i.e., $\{ , \}$ makes the sheaf of regular functions ${\cO}_X$ into a Poisson algebra. Assume that $X$ is a $G$-variety with action map $a:G \times X \to X$,
and denote by $p:G\times X \to X$ the projection $p(g,x)=x$. The sheaf of functions ${\cO}_{G\times X} = {\cO}_G \otimes_{\Bbbk} {\cO}_X$ then acquires the structure of a Poisson Lie algebra, which is uniquely determined by the property (see \cite{KS}, Proposition 1.2.10, p. 9):
$$
(*) \ \ \{ \phi_1 \otimes f_1, \phi_2 \otimes f_2 \} = \{ \phi_1, \phi_2 \}_G \otimes f_1 f_2
+ \phi_1 \phi_2 \otimes \{ f_1, f_2 \}_X, \ \phi_i \in {\cO}_G, f_i \in {\cO}_X.
$$

Suppose for the remainder of this subsection that $X$ is affine, so we may work with regular functions $\Bbbk [G \times X] \cong \Bbbk[G] \otimes_{\Bbbk} \Bbbk[X]$. Note that $p^*(f)=1\otimes f$ using this
identification. By formula $(*)$, it follows that $p:G\times X \to X$ is Poisson.

\begin{remark}
\label{rem_invariance}
Let $f\in \Bbbk[X]$, and $\Bbbk[X]^G$ the ring of $G$-invariant functions on $X$.  Then $f\in \Bbbk[X]^G$ if and only if $p^*(f)=a^*(f)$.
\end{remark}

\begin{lemma}
\label{lem_invariantpalg}
Let $X$ be an affine Poisson $G$-variety. If $a:G\times X \to X$ is a Poisson morphism, then $\Bbbk [X]^G$ is a Poisson subalgebra of $\Bbbk[X]$.
\end{lemma}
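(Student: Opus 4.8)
The plan is to use the characterization of $G$-invariant functions from Remark~\ref{rem_invariance} together with the fact that the action map $a$ and the projection $p$ are both Poisson, and then to exploit the defining property (*) of the product Poisson structure on $G \times X$. I want to show that if $f_1, f_2 \in \C[X]^G$, then $\{f_1, f_2\}_X \in \C[X]^G$; that is, I must verify that $p^*(\{f_1,f_2\}_X) = a^*(\{f_1, f_2\}_X)$.

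First I would observe that since $p$ is Poisson (as noted just before the statement via formula (*)), we have $p^*(\{f_1, f_2\}_X) = \{p^*(f_1), p^*(f_2)\}_{G\times X}$. Since $a$ is assumed to be a Poisson morphism, we likewise have $a^*(\{f_1, f_2\}_X) = \{a^*(f_1), a^*(f_2)\}_{G\times X}$. Now, because $f_1, f_2$ are $G$-invariant, Remark~\ref{rem_invariance} gives $p^*(f_i) = a^*(f_i)$ for $i = 1, 2$. Therefore
\begin{equation*}
p^*(\{f_1, f_2\}_X) = \{p^*(f_1), p^*(f_2)\}_{G\times X} = \{a^*(f_1), a^*(f_2)\}_{G\times X} = a^*(\{f_1, f_2\}_X).
\end{equation*}
Applying Remark~\ref{rem_invariance} once more, this equality shows precisely that $\{f_1, f_2\}_X \in \C[X]^G$.

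The remaining point to check is that $\C[X]^G$ is closed under the algebra operations and thus genuinely a subalgebra; this is standard and automatic since the pullback maps $p^*$ and $a^*$ are ring homomorphisms, so the locus where they agree is a subalgebra. Combined with the bracket computation above, this shows $\C[X]^G$ is a Poisson subalgebra. The only genuine content is the interplay of the two Poisson morphisms $p$ and $a$ with $G$-invariance, and I do not anticipate a serious obstacle here: the entire argument is a short diagram-chase through the pullback identities, with Remark~\ref{rem_invariance} supplying the crucial translation between invariance and the equality $p^* = a^*$ on invariants. The one subtlety worth stating carefully is that both $p$ and $a$ being Poisson is essential—invariance alone would not suffice without knowing that $a$ respects brackets.
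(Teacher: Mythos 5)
Your proof is correct and follows essentially the same route as the paper: both arguments apply Remark~\ref{rem_invariance} twice and compare $a^*\{f_1,f_2\}_X$ with $p^*\{f_1,f_2\}_X$, the only cosmetic difference being that the paper evaluates $\{1\otimes f_1, 1\otimes f_2\}_{G\times X} = 1\otimes\{f_1,f_2\}_X$ directly from formula (*), whereas you invoke the equivalent packaged statement that $p$ is Poisson (which the paper itself derives from (*)).
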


\begin{proof}
Since $a$ is a Poisson morphism, $a^*\{ f_1, f_2 \}_X = \{ a^* f_1, a^* f_2 \}_{G\times X}$ for $f_1, f_2 \in \Bbbk [X]$. Suppose $f_1, f_2 \in \Bbbk [X]^G$. Using Remark \ref{rem_invariance}, $a^*(f_i)=p^*(f_i)=1\otimes f_i$. It follows that

$$
a^*\{ f_1, f_2 \}_X = \{ 1\otimes f_1, 1 \otimes f_2 \}_{G\times X} = 1\otimes \{ f_1, f_2 \}_X
= p^* \{ f_1, f_2 \}_X.
$$

Again by Remark \ref{rem_invariance}, $\{ f_1, f_2 \}_X \in \Bbbk[X]^G$.
\end{proof}

Now assume that $G$ is reductive. Then $\Bbbk[X]^G$ is a finitely generated $\Bbbk$-algebra, and by definition the geometric invariant theory quotient $X\quot G = \mathrm{Spec} (\Bbbk[X]^G)$, or in other words, $X\quot G$ is the affine variety with ring of regular functions $\Bbbk [X\quot G]=\Bbbk [X]^G$. There is a quotient morphism $q:X \to X\quot G$ with the property that $q^*:\Bbbk[X\quot G] \to \Bbbk[X]$ is the inclusion of invariant functions.

By  Lemma~\ref{lem_invariantpalg}, $\Bbbk[X\quot G]$ is a Poisson algebra, so $X\quot G$ is a Poisson algebraic variety. Since the inclusion $q^*:\Bbbk[X\quot G] \to \Bbbk[X]$ is Poisson, it follows
that $q:X \to X\quot G$ is Poisson.  Therefore we have proved the following proposition.

\begin{proposition}
\label{prop_affinequotient}
If $(G,\pi_G)$ is a reductive Poisson linear algebraic group and $(X, \{ , \}_X )$ is an affine Poisson algebraic variety, and the action map $G\times X \to X$ is Poisson, then $X\quot G$ is a Poisson algebraic variety, and the morphism $q:X \to X\quot G$ is Poisson.
\end{proposition}

\subsection{Quotient of a \texorpdfstring{$G$}{G}-linearized variety}

\label{sec_projectivequotient}

In this section, we explain how to put a Poisson structure on a GIT quotient of a linearized irreducible $G$-variety $X$.  Recall the notions of $G$-linearized
line bundle $L$ on $X$ and semistable locus from Section \ref{wonchar}.  The semistable
locus $X^{ss}_L = \cup_{s_i} X_{s_i}$ is a finite union of open affine $G$-stable
subsets $U_{s_i}$ of $X$, where $U_{s_i}$ is the non-vanishing locus of the
section $s_i$ of a power of $L$.   Let $Y_{s_i}:= U_{s_i}\quot G$ be the quotient
of the affine $G$-variety $U_{s_i}$.   Then the quotient $X\quot_L G$ has an open
affine cover $X\quot_L G = \cup Y_{s_i}$ (see \cite[Theorem 8.1]{Do}).

We remark that if we are given a Poisson structure on a 
variety $Z$, there is an induced Poisson structure on any open set
$U$ of $Z$.  Indeed, we may assume that $Z$ is affine 
and  $U$ is covered by affine open sets $Z_f :=
\{ x\in Z : f(x) \not = 0 \}$. The Poisson structure on $Z$
induces a Poisson Lie algebra structure on $\Bbbk[Z]$, and
we can define a Poisson Lie algebra structure on $\Bbbk[Z_f]$
by the formula given in the proof of Lemma 1.3 in \cite{kaledin}.
These Poisson structures glue together on $Z_f \cap Z_g = Z_{fg}$
and hence define a Poisson structure on the open set $U$.

\begin{proposition}
\label{prop_projgitpoisson}
Let $X$ be a an irreducible Poisson $G$-variety with $G$-linearization $L$, 
where $(G,\pi_G)$ is a reductive Poisson algebraic
group and the action morphism $a:G\times X \to X$ is
a Poisson morphism. Then $X^{ss}_L$ and $X\quot_L G$ are Poisson and $q:X^{ss}_L \to X\quot_L G$
is a Poisson morphism.
\end{proposition}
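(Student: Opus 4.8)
The plan is to reduce the projective statement to the affine case established in Proposition~\ref{prop_affinequotient}, using the affine cover $Y \,=\, \bigcup_f Y_f$ by the open sets $Y_f \,\cong\, X_f \quot G$ constructed above, and then to check that the resulting affine-local Poisson structures glue. First I would equip $X_{ss}$ with a Poisson structure: since $X_{ss}$ is an open $G$-stable subset of the Poisson variety $X$, the remark preceding this proposition gives $X_{ss}$ an induced Poisson structure, for which the inclusion $X_{ss} \hookrightarrow X$ is Poisson. For each $f \in \C_{hom}(\hat{X})^G$, the affine open $X_f$ is likewise $G$-stable, so it inherits a Poisson structure from $X$, and the restricted action $G \times X_f \to X_f$ remains a Poisson morphism, being the restriction of $a$ to the open subvariety $G \times X_f \subset G\times X$ (the Poisson-morphism condition is local on source and target). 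Thus each pair $(X_f, G)$ satisfies the hypotheses of Proposition~\ref{prop_affinequotient}, which yields that $Y_f \cong X_f \quot G$ is an affine Poisson variety and that the quotient $q_f : X_f \to Y_f$ is Poisson. Concretely, by Lemma~\ref{lem_invariantpalg}, the subalgebra $\C[Y_f] = \C[X_f]^G$ is a Poisson subalgebra of $\C[X_f]$.

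It remains, and this is the heart of the argument, to check that these Poisson structures on the various $Y_f$ glue to a single Poisson structure on $Y = X\quot G$. On the overlap $Y_f \cap Y_g = Y_{fg}$, the Poisson algebra $\C[Y_{fg}] = \C[X_{fg}]^G$ may be obtained either by localizing $\C[Y_f]$ or by localizing $\C[Y_g]$; in either case the induced bracket is the restriction to $G$-invariants of the bracket on $\C[X_{fg}]$, which is itself the unique extension of the bracket on $X_{ss}$ to this localization (via the localization formula of \cite[Lemma 1.3]{kaledin}). Since restriction of functions commutes both with taking $G$-invariants and with the Poisson bracket of $X_{ss}$, the two localized brackets on $\C[Y_{fg}]$ coincide. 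Hence the algebras $\C[Y_f]$ assemble into a sheaf of Poisson algebras on $Y$, so that $X\quot G$ is a Poisson variety. Finally, the quotient map $q : X_{ss} \to X\quot G$ restricts over each $Y_f$ to $q_f$, which is Poisson; since being a Poisson morphism is a local condition, $q$ is Poisson.

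The main obstacle is precisely this gluing step: a priori each local Poisson structure on $Y_f$ depends on the choice of $f$, and one must verify compatibility on the intersections $Y_{fg}$. The point to be careful about is that the Poisson bracket on a localization is uniquely determined by the bracket on the original ring by the Leibniz rule, so all of the affine-local brackets descend from the single bracket on $X_{ss}$ (equivalently on $X$); once this is made precise, the agreement of the two brackets on each $Y_{fg}$ is automatic, and the global Poisson structure on $X\quot G$ follows. The semistable locus $X_{ss}$ requires no further work, since its Poisson structure is simply the restriction of that on $X$ and the opens $X_f$ cover it compatibly.
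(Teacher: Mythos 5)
Your proposal is correct and follows essentially the same route as the paper's proof: cover $Y = X \quot G$ by the affine opens $Y_f \cong X_f \quot G$ for $f \in \C_{hom}(\hat{X})^G$, apply Proposition~\ref{prop_affinequotient} to each $G$-stable affine Poisson variety $X_f$, glue the resulting structures $\pi_f$ on overlaps $Y_f \cap Y_g = Y_{fg}$, and deduce that $q$ is Poisson because it is so on each chart. In fact your argument is slightly more complete than the paper's, which declares the agreement of $\pi_f$ and $\pi_g$ on $Y_{fg}$ ``routine'' and leaves it to the reader, whereas you supply the key point: the bracket on a localization is uniquely determined by the Leibniz rule (via \cite[Lemma 1.3]{kaledin}), so both restrictions come from the single bracket on $X_{ss}$ and must coincide.
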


\begin{proof}
There is a finite set of $G$-invariant sections $s_i$ with the property that
the non-vanishing locus 
$X_{s_i}$ of $s_i$, is open, affine and $G$-stable.   Hence by
Proposition \ref{prop_affinequotient}, $Y_i := U_{s_i}\quot G$ is
an affine Poisson variety.  Thus, we have a Poisson structure $\pi_i$ on each open set
$Y_i$ in the
open cover $X\quot_L G = \cup  Y_i$.
The functions on the intersections $Y_i \cap Y_j$ form
a subring in the fraction field $\Bbbk (Y_i)$, and the
above formula from \cite{kaledin} implies that $\pi_i$ and
$\pi_j$ coincide on the sheaf of functions on $Y_i \cap Y_j$
and thus glue to give 
an induced Poisson structure on $X\quot_L G$. Since the morphism
$q:X \to Y$ is Poisson on the affine cover $U_i \to Y_i$ for
each of our invariant sections $s_i$, it follows that $q$ is a
Poisson morphism.  
\end{proof}

\def\cdprime{$''$} \def\cdprime{$''$} \def\cprime{$'$} \def\cprime{$'$}
  \def\cprime{$'$} \def\cprime{$'$}

\end{document}